\documentclass[12pt,a4paper]{amsart}
\usepackage{latexsym}
\usepackage{amsfonts}
\usepackage[mathscr]{eucal}
\usepackage{epsfig}
\usepackage{a4wide}
\usepackage{ifthen}

\newboolean{draft}
\setboolean{draft}{false}

\frenchspacing

\parindent0pt
\def\@copyright{}
\setlength{\parskip}{6pt plus 2pt minus 1pt}

\newtheorem{theorem}{Theorem}[section]

\newtheorem{example}{Example}[section]

\newtheorem{corollary}{Corollary}[section]
\newtheorem{remark}{Remark}[section]

\newcommand{\N}{ \mathbb{N} }
\newcommand{\C}{ \mathbb{C} }

\newcommand{\R}{ \mathbb{R} }

\newcommand{\trunc}[1]{ {\lfloor #1 \rfloor} }

\newcommand{\calB}{\mathcal{B}}

\newcommand{\calD}{\mathcal{D}}
\newcommand{\calE}{\mathcal{E}}
\newcommand{\calF}{\mathcal{F}}

\newcommand{\calN}{\mathcal{N}}

\newcommand{\calZ}{\mathcal{Z}}

\newcommand{\vecnull}{{\mathbf 0}}

\newcommand{\Var}{{\mbox{Var\,}}}
\newcommand{\Cov}{{\mbox{Cov\,}}}

\begin{document}

\pagestyle{plain}
\begin{titlepage}
\pagenumbering{arabic}
\title[]{}
\date{11/2006}
\end{titlepage}

\maketitle

\begin{center}
  \Large
  WEIGHTED DICKEY-FULLER PROCESSES FOR DETECTING STATIONARITY
\end{center}
\vskip 1cm
\begin{center}
  Ansgar Steland\footnote{Address of correspondence: Prof. Dr. Ansgar Steland,
Institute of Statistics, RWTH Aachen University, W\"ullnerstr.~3, D-52056 Aachen, Germany.}\\
  Institute of Statistics, \\
  RWTH Aachen University, Germany\\
  steland@stochastik.rwth-aachen.de
\end{center}

\begin{abstract}
Aiming at monitoring a time series to detect stationarity as soon
as possible, we introduce monitoring procedures based on
kernel-weighted sequential Dickey-Fuller (DF) processes, and
related stopping times, which may be called weighted Dickey-Fuller
control charts. Under rather weak assumptions, (functional)
central limit theorems are established under the unit root null
hypothesis and local-to-unity alternatives. For general dependent
and heterogeneous innovation sequences the limit processes depend
on a nuisance parameter. In this case of practical interest, one
can use estimated control limits obtained from the estimated
asymptotic law. Another easy-to-use approach is to transform the
DF processes to obtain limit laws which are invariant with respect
to the nuisance parameter. We provide asymptotic theory for both
approaches and compare their statistical behavior in finite
samples by simulation.
\\[1cm]
{\bf Keywords:}
Autoregressive unit root, change point, control chart, nonparametric smoothing, sequential analysis, robustness.
\end{abstract}

\newpage
\section*{Introduction}

Analyzing whether a time series is stationary or is a non-stationary
random walk (unit root process) in the sense that the first order
differences form a stationary series is an important issue in time
series analysis, particularly in econometrics. Often the task is to
test the unit root null hypothesis against the alternative of
stationarity at a pre-specified $ \alpha $ level, which ensures that
a decision in favor of stationarity is statistically significant.
For instance, the equilibrium analysis of macroeconomic variables as
established by Granger (1981) and Engle and Granger (1987) defines
an equilibrium of two random walks as the existence of stationary
linear combination. When analyzing equilibrium errors of a
cointegration relationship, rejection of the null hypothesis in
favor of stationarity means that the decision to believe in a valid
equilibrium is statistically justified at the pre-specified $\alpha
$ level. For an approach where CUSUM based residual tests are
employed to test the null hypothesis of cointegration, we refer to
Xiao and Phillips (2002). Their test uses residuals calculated from
the full sample. In the present article we study sequential
monitoring procedures which aim at monitoring a time series until a
time horizon $T$ to detect stationarity as soon as possible.

The question whether a time series is stationary or a random walk is
also of considerable importance to choose a valid method when
analyzing the series to detect trends. Such procedures usually
assume stationarity, see Steland (2004, 2005a), Pawlak et al.
(2004), Huskov\'a (1999), Huskov\'a and Slab\'y (2001), Ferger
(1993, 1995), among others. As shown in Steland (2005b), when using
Nadaraya-Watson type smoothers to detect drifts the limiting
distributions for the random walk case differ substantially from the
case of a stationary time series.

To detect changes in a process or a misspecified model, a common approach
originating in statistical quality control is to formulate an in-control model
(null hypothesis) and an out-of-control model (alternative), and to apply
appropriate control charts resp. stopping times. Given a time series $ Y_1, Y_2, \dots $
a monitoring procedure with time horizon (maximum sample size) $T$
is given by a stopping time
$
  S_T^* = \inf \{ 1 \le t \le T : U_t \in A \}
$ using the convention $ \inf \emptyset = \infty $, where $ U_t $,
called {\em control statistic}, is a $ \sigma(Y_1, \dots, Y_t)
$-measurable $ \R $-valued statistic sensitive for the alternatives
of interest, and $ A \subset \R $ is a measurable set such that $ \{
U_t \in A \} $ has small probability under the null model and high
probability under the alternative of interest. In most cases $ A $
is of the form $ (-\infty,c) $ or $ (c,\infty) $ for some given {\em
control limit} (critical value) $ c $. To design monitoring
procedures, the standard approach is to choose the control limit to
ensure that the average run length (ARL), $ ARL = E( S_T^* ) $, is
greater or equal to some pre-specified value. However, controlling
the significance level is a also serious concern. The results
presented in this article can be used to control any characteristic
of interest, although we will focus on the type I error in the
sequel.

The (weighted) Dickey-Fuller control chart studied in this article
is essentially based on a sequential version of the well-known
Dickey-Fuller (DF) unit root test, which is motivated by least
squares. Due to its power properties this test is very popular,
although it is known that its statistical properties strongly depend
on a correct specification of the correlation structure of the
innovation sequence. The DF test and  its asymptotic properties,
particularly its non-standard limit distribution have been studied
by White (1958), Fuller (1976), Rao (1978, 1980), Dickey and Fuller
(1979), and Evans and Savin (1981), Chan and Wei (1987, 1988),
Phillips (1987), among others. We will generalize some of these
results. To ensure quicker detection in case of a change to
stationarity, we modify the DF statistic by introducing kernel
weights to attach small weights to summands corresponding to past
observations. We provide the asymptotic theory for the related
Dickey-Fuller (DF type) processes and stopping times, also covering
local-to-unity alternatives.

For correlated error terms the asymptotic distribution of the DF
test statistic, and hence the control limit of a monitoring
procedure, depends on a nuisance parameter, which can be estimated
by Newey-West type estimators. We consider two approaches to deal
with that problem. Firstly, based on a consistent estimate of the
nuisance parameter one may take the asymptotic control limit
corresponding to the estimated value. Secondly, following Phillips
(1987) one may consider appropriate transformations of the processes
possessing limit distributions which no longer dependent on the
nuisance parameter. A nonparametric approach called KPSS test which
avoids this problem, at least for I(1) processes, has been proposed
by  Kwiatkowski et al. (1992). That unit root test has better type I
error accuracy, but tends to be less powerful. Monitoring procedures
related to this approach and their merits have been studied in
detail in Steland (2006).

The organisation of the paper is as follows. In
Section~\ref{SecModel} we explain and motivate carefully our
assumptions on the time series model, and present the class of
Dickey-Fuller type processes and related stopping times. The
asymptotic distribution theory under the null hypothesis of a random
walk is provided in Section~\ref{AsH0}. Section~\ref{AsH1} studies
local-to-unity asymptotics, where the asymptotic distribution is
driven by an Ornstein-Uhlenbeck process instead of the Brownian
motion appearing in the unit root case. Finally, in
Section~\ref{SecSim} we compare the methods by simulations.

\section{Model, assumptions, and Dickey-Fuller type processes and control charts}
\label{SecModel}

\subsection{Time series model}

Our results work under quite general nonparametric assumptions allowing for dependencies and
conditional heteroskedasticity (GARCH effects), thus providing a nonparametric view
on the parametrically motivated approach. To motivate our assumptions, let
us consider the following common time series model, which is often used in applications.
Suppose at this end that $ \{ Y_t \} $ is an AR($p$) time series, i.e.,
$$
  Y_t = \alpha_1 Y_{t-1} + \dots + \alpha_p Y_{t-p} + u_t,
$$
for starting values $ Y_{-p}, \dots, Y_{-1} $,
where $ \{ u_t \} $ are i.i.d. error terms (innovations) with $ E(u_t) = 0 $ and
$ \sigma_u^2 = \Var( u_t ) $, $ 0 < \sigma_u^2 < \infty $.
Assume the characteristic polynomial
$$
  p(z) = 1 - \alpha_1 z - \dots - \alpha_p z^p, \qquad z \in \C,
$$
has a unit root, i.e., $ p(1) = 0 $, of multiplicity $1$, and all other
roots are outside the unit circle, i.e., $ p(z) = 0 $ implies $ |z| > 1 $.
Then $ p(z) = p^*(z)(1-z) $ for some polynomial $ p^*(z) $ with has no roots
in the unit circle implying that $ 1/p^*(z) $ exists for all $ |z| \le 1 $.
We obtain $ p(L) = p^*(L) \Delta Y_t = \epsilon_t $,
where $L$ denotes the lag operator. Since $p^*(L) $ can be inverted, we have
the representation
\begin{equation}
\label{ARpAsAr1}
  Y_t = Y_{t-1} + \sum_{j \ge 0} \beta_j u_{t-j},
\end{equation}
for coefficients $ \{ \beta_j \}. $ This means, $ \{ Y_t \} $
satisfies an AR($1$) model with correlated errors. For the
calculation of $ \beta_j $ we refer to Brockwell and Davis (1991,
Sec. 3.3.) In particular, to analyze an AR($p$) series for a unit
root, one can work with an AR($1$) model with correlated errors.

The representation (\ref{ARpAsAr1}) motivates the following time series framework
which will be assumed in the sequel. Suppose we are given an univariate time series
$ \{ Y_t : t = 0,1, \dots \} $ satisfying
\begin{equation}
\label{ARModel}
  Y_t = \rho Y_{t-1} + \epsilon_t, \qquad t \ge 1, \qquad Y_0 = 0,
\end{equation}
where $ \rho \in (-1,1] $ is a fixed but unknown parameter.
Concerning the error terms $ \{ \epsilon_t \} $ we impose the following
assumptions.
\begin{itemize}
  \item[(E1)] $ \{ \epsilon_t \} $ is a strictly stationary series with mean
  zero and $ E |\epsilon_1|^4 < \infty $ with the following properties:
  We have
  $$
    \sum_{t=1}^\infty \Cov( \epsilon_1^2, \epsilon_{1+t}^2 ) <
    \infty,
  $$
  and both $ \{ \epsilon_t \} $ and
  $ \{ \epsilon_t^2 \} $ satisfy a functional central limit theorem,
  i.e.,
  \begin{equation}
  \label{FCLT1}
    T^{-1/2} \sum_{i \le \trunc{Ts}} \epsilon_i \Rightarrow \eta B(s),
  \end{equation}
  and
  \begin{equation}
  \label{FCLT2}
    T^{-1/2} \sum_{i \le \trunc{Ts}} (\epsilon_i^2 - E \epsilon_i^2 )  \Rightarrow \eta' B^{(2)}(s),
  \end{equation}
  as $ T \to \infty $, for constants $ 0 < \eta, \eta' < \infty  $.
  Here $ B $ and $ B^{(2)} $ denote (standard) Brownian motions (Wiener processes) with start in $0$.
  \item[(E2)] $ \{ \epsilon_t \} $ is a strong mixing strictly stationary times series
  with $ E|\epsilon_1|^{4(1+\delta)} < \infty $ for some $ \delta > 0 $, and
  with mixing coefficients, $ \alpha(k) $, satisfying
  $$
    \sum_{k=1}^\infty (k+1)^\delta \alpha(k)^{\delta/(2+3\delta)} < \infty.
  $$
\end{itemize}
In assumption (E1) and the rest of the paper
$ \Rightarrow $ denotes weak convergence in the
space $ D[0,1] $ of all cadlag functions equipped with the Skorokhod metric $ d $.

\begin{remark}
The assumption that $ \{ \epsilon_t \} $ satisfies an invariance
principle can be regarded as a nonparametric definition of the $
I(0) $ property ensuring that the partial sums converge weakly to a
(scaled) Brownian motion $ B $. For a parametrically oriented
definition see Stock (1994). Particularly, the scale parameter $
\eta $ is given by
\begin{equation}
\label{DefEta2}
  \eta^2 = \lim_{T \to \infty} \eta_T^2, \qquad
  \eta_T^2
           = \sigma^2 + 2 \sum_{t=1}^T (T-t)T^{-1} E( \epsilon_1 \epsilon_{1+t}
           )
\end{equation}
Also introduce the notations
\begin{equation}
\label{DefF}
  \vartheta_T^2 = \eta_T^2 / \sigma^2, \qquad
  \vartheta = \lim_{T \to \infty} \vartheta_T.
\end{equation}
If the $ \epsilon_t $ are uncorrelated, we have $ \eta_T^2 = \sigma^2 $, and
 $ \vartheta_T^2 = 1 $.
\end{remark}

As a non-trivial example for processes satisfying (E1) let us
consider ARCH processes.

\begin{example} A time series $ \{ X_t \} $ satisfies ARCH($\infty$) equations,
if there exists a sequence of i.i.d. non-negative random variables, $ \{ \xi_t \} $,
such that
$$
  X_t = \rho_t \xi_t, \qquad \rho_t = a + \sum_{j=1}^\infty b_j X_{t-j}
$$
where $ a \ge 0 $, $ b_j \ge 0 $, $ j = 1,2, \dots $ This model is often applied
to model conditional heteroscedasticity of an uncorrelated sequence
$ \{ \epsilon_t \} $ with $ E\epsilon_t = 0 $ for all $t$, by putting
$ X_t = \epsilon_t^2 $. A common choice for $ \xi_t $ is to assume that the
$ \xi_t $ are i.i.d. with common standard normal distribution. In Giraitis et al. (2003)
it has been shown that an unique and strictly stationary solution exists and satisfies
$ \sum_k \Cov( X_1, X_{1+k} ) < \infty $, if
$$
  ( E \xi_1^2 )^{1/2} \sum_{j=1}^\infty b_j < 1.
$$
In addition, under these conditions the functional central limit
theorem (\ref{FCLT2}) holds. The rate of decay of the coefficients $
b_j $ controls the asymptotic behavior of $ \Cov( X_1, X_{1+k} ) $.
If for some $ \gamma > 1 $ and $ c > 0 $ we have $ b_j \le c
j^{-\gamma} $, $ j = 1,2,\dots $, then there exists $ C > 0 $ such
that $ \Cov( X_1, X_{1+k} ) \le C k^{-\gamma} $ for $ k \ge 1 $.
Thus, depending on the rate of decay (E2) may also holds.
\end{example}

\begin{remark} Assumption (E2) will be used to verify a tightness
criterion. Combined with appropriate moment conditions it implies
the invariance principles (\ref{FCLT1}) and (\ref{FCLT2}).
\end{remark}

\subsection{Dickey-Fuller processes}

We will now introduce the class of Dickey-Fuller processes and related
detection procedures.
Recall that the least squares estimator of the parameter $ \rho $ in
model (\ref{ARModel}) is given by
$$
  \widehat{\rho}_T = \sum_{t=1}^T Y_{t-1} Y_t \ \Big/ \ \sum_{t=1}^T Y_{t-1}^2.
$$
To test the null hypothesis $ H_0: \rho = 1 $, one forms the
Dickey-Fuller (DF) test statistic
$$
  D_T = T( \widehat{\rho}_T - 1 ) = \frac{T^{-1} \sum_{t=1}^T Y_{t-1} (Y_t-Y_{t-1}) }
  { T^{-2} \sum_{t=1}^T Y_{t-1}^2 },
$$
Suppose at this point that the $ \epsilon_t $ are uncorrelated.
Provided $ | \rho | < 1 $,
$
  \left\{ \sum_{t=1}^T Y_{t-1}^2 \right\}^{1/2} (\widehat{\rho}_T - 1)
    \stackrel{d}{\to} \calN(0,1),
$
as $ T \to \infty $. However, $ \widehat{\rho}_T $ has
a different convergence rate and a non-normal limit distribution, if $ \rho = 1 $.
It is known that
$$
  D_T \stackrel{d}{\to} \calD_1 = (1/2) ( B(1)^2 - 1 ) \ \Big/ \ \int_0^1 B(r)^2 dr,
$$
as $ T \to \infty $, see White (1958), Fuller (1976), Rao (1978, 1980),
Dickey and Fuller (1979), and Evans and Savin (1981).
Recall that $ B $ denotes standard Brownian motion.
Based on that result one can
construct a statistical level $ \alpha $ test, which rejects the null hypothesis
$ H_0: \rho = 1 $ of a unit root against the alternative $ H_1 : \rho < 1 $
if $ D_T < c $, where the critical value $ c $ is the $ \alpha $-quantile of
the distribution of $ \calD_1 $.
More generally, we want to construct a detection rule which provides a signal
if there is some change-point $q$ such that $ Y_1, \dots, Y_{q-1} $ form a
random walk (unit root process), and $ Y_q, \dots, Y_T $ form an $ AR(1) $
with dependent innovations. This means, the alternative hypothesis is
$ H_1 = \cup_{1 \le q \le T} H_1^{(q)} $, where $ H_1^{(q)} $, $ 1 \le q \le T $, specifies that
$$
  Y_{t} = \left\{
    \begin{array}{ll}
      Y_{t-1} + \epsilon_t,      & 1 \le t < q, \\
      \rho Y_{t-1} + \epsilon_t, & q \le t \le T,
    \end{array}
    \right.
$$
where $ \rho \in (-1,1) $. However, for the calculation of the detection rule to be introduced
now knowledge of a specific alternative hypothesis is not required.

A naive approach to monitor a time series to check for
deviations from the unit root hypothesis
is to apply the DF statistic at each time point using the most recent
observations.
A more sophisticated version of this idea is to modify the DF statistic
to ensure that summands in the numerator have small weight if their time
distance to the current time point is large. To define such a detection rule,
let us introduce the following sequential kernel-weighted
Dickey-Fuller (DF) process
\begin{equation}
\label{DefDT}
  D_T(s) =
  \frac{ \trunc{Ts}^{-1} \sum_{t=1}^{ \trunc{Ts} } Y_{t-1} \Delta Y_t  K( ( \trunc{Ts} - t )/ h ) }
       { \trunc{Ts}^{-2} \sum_{t=1}^{ \trunc{Ts} } Y_{t-1}^2 }, \qquad s \in [0,1],
\end{equation}
where $ \Delta Y_t = Y_t - Y_{t-1} $. Here and in the following we
put $ 0/0 = 0 $ for convenience. Note that $ \trunc{Ts} $ plays
the role of the current time point. The non-negative smoothing
kernel $K$ is used to attach smaller weights to summands from the
distant past to avoid that such summands dominate the sum. Thus,
kernels ensuring that $ z \mapsto K(|z|) $, $ z \in \R $, is
decreasing are appropriate, but that property is not required. We
do not use kernel weights in the denominator, since it is used to
estimate a nuisance parameter. We will require the following
regularity conditions for $ K : \R \to \R_0^+ $.
\begin{itemize}
  \item[(K1)] $ \| K \|_\infty < \infty $, $\int K(z) dz = 1 $ and $ \int z K(z) dz = 0 $.
  \item[(K2)] $ K $ is $ C^2 $ with bounded derivative.
  \item[(K3)] $ K $ has bounded variation.
\end{itemize}
Note that it is not required to use a kernel with compact support.

The parameter $ h = h_T$ is used as a scaling constant in the kernel and defines
the memory of the procedure. For instance,
if $ K(z) > 0 $ if $ z \in [-1,1] $ and $ K(z) = 0 $ otherwise, the process
$ U_T $ looks back $ h $ observations. We will assume that
\begin{equation}
\label{AssZeta}
  T / h_T \to \zeta, \qquad T \to \infty,
\end{equation}
for some $ 1 \le \zeta < \infty $.
That condition ensures that the number of observations used by $ D_T $
gets larger as $ T $ increases. Note that the parameter $ \zeta $, which will also
appear in the limit distributions, could be absorbed into the kernel $K$.
However, in practice one usually fixes a kernel $K$ and chooses a bandwidth $ h $
relative to the time horizon $T$. (\ref{AssZeta}) is therefore not restrictive.

\subsection{Dickey-Fuller type control charts}

Since small values of $ D_T(s) $ provide evidence for the alternative
that the time series is stationary, intuition suggests that the control chart
should give a signal if $ D_T $ is smaller than a specified control limit $ c$.
Hence, we define
$$
  S_T = S_T(c) = \inf \{ k \le t \le T : D_T(t/T) < c \}, \qquad \inf \emptyset = \infty.
$$
We will assume that the start of monitoring, $k$, is given by
$$
  k = \trunc{T \kappa}, \qquad \text{for some $ \kappa \in (0,1) $}.
$$
A reasonable approach to choose $ c $ is to control the type I error rate
$ \alpha \in (0,1) $, i.e.,
to ensure that
\begin{equation}
\label{AsLimitAlpha}
  \lim_{T \to \infty} P_0( S_T(c) \le T ) = \alpha,
\end{equation}
where $ P_0 $ indicates that the probability is calculated assuming that
$ \{ Y_t \} $ is a random walk corresponding to the null hypothesis $ H_0: \rho = 1 $.

\subsection{DF control chart with estimated control limit}

In the next section we will show that $ D_T $ converges weakly to some
stochastic process $ \calD_\vartheta $ depending on the nuisance parameter
$$
  \vartheta = \lim_{T \to \infty} \vartheta_T = \eta / \sigma,
$$
and that $ S_T/T $ converges in distribution
to $ \inf \{ s \in [\kappa,1] : \calD_\vartheta(s) < c \} $. Hence, if $ c $ is chosen
from the asymptotic distribution via (\ref{AsLimitAlpha}), $c = c(\vartheta) $
is a function of $ \vartheta $. Therefore, the basic idea is to estimate
$ \vartheta $ at each time point using only past and current data, and to
use the corresponding limit.

Our estimator for $ \vartheta $ will be based on a Newey-West type estimator,
thus circumventing the problem to specify the short memory dynamics of the process
explicitly.
Let $ \gamma(k) = E( \epsilon_t \epsilon_{t+k} ) $ and denote by
$ r(k) = \gamma(k) / E ( \epsilon_t^2 ) $,
$ k \in \N $, the autocorrelation function of the time series $ \{ \epsilon_t \} $.
Since $ \epsilon_t = \Delta Y_t $ if $ \rho = 1 $, we
can estimate $ \gamma(k) $ and $ r(k) $ under the null hypothesis by
\begin{equation}
\label{DefNW1}
  \widehat{r}_t(k) = \widehat{\gamma}_t(k) / \widehat{\sigma}^2_t,
\quad
  \widehat{\gamma}_t(k) = t^{-1} \sum_{s=k}^t \Delta Y_s \Delta Y_{s-k}, \quad
  \widehat{\sigma}^2_t = t^{-1} \sum_{s=1}^t \Delta Y_s^2.
\end{equation}
The parameter $ \vartheta^2 $ can now be estimated by the Newey-West estimator
given by
\begin{equation}
\label{DefNW2}
  \widehat{\vartheta}_t^2 = \widehat{\eta}^2_t / \widehat{\sigma}^2_t,
  \qquad \widehat{\eta}^2_t = \widehat{\sigma}^2_t + 2 \sum_{i=1}^m w(m,i) \widehat{\gamma}_t^2(i),
\end{equation}
where $ w(m,i) = (m-i)/m $ are the Bartlett weights and $m$ is a lag truncation
parameter, see Newey and West (1987). Andrews (1991) studies more general weighting functions
and shows that the rate $ m = o( T^{1/2} ) $ is sufficient for consistency.

The Dickey-Fuller control chart for correlated time series works now as follows.
At each time point $ t $ we estimate
 $ \vartheta $ by $ \widehat{\vartheta}_t $ and calculate
the corresponding estimated control limit $ c( \widehat{\vartheta}_t ) $. A signal
is given if $ D_T $ is less than the estimated control limit, i.e., we use the rule
$$
  \widehat{S}_T
  = \inf \{ k \le t \le T : D_T( t/T ) < c(\widehat{\vartheta}_t) \}.
$$

\subsection{DF control chart based on a transformation}

Alternatively, one may use a transformation of $ D_T $, namely
\begin{equation}
\label{DefTrans}
  E_T(s) = D_T(s)
    + \frac{
        \frac{ \widehat{\sigma}^2_{\trunc{Ts}} - \widehat{\eta}^2_{\trunc{Ts}} }{ 2 \trunc{Ts} }
               \sum_{t=1}^{\trunc{Ts}} K( (\trunc{Ts}-t)/h )
      }{
        \trunc{Ts}^{-2} \sum_{t=1}^{\trunc{Ts}} Y_{t-1}^2
      },
      \qquad s \in (0,1].
\end{equation}
It seems that this transformation idea dates back to Phillips (1987).
We will show that for arbitrary $ \vartheta $ the process
$ E_T $ converges weakly to the limit of $ D_T $ for $ \vartheta = 1 $.
Consequently, if $ c $ denotes the control limit ensuring that $ S_T $ has
size $ \alpha $ when $ \vartheta = 1 $, then the detection rule
$$
  Z_T = \inf\{ k \le t \le T : E_T( t/T ) < c \}
$$
has asymptotic size $ \alpha $ for any $ \vartheta $.

In the next section we shall show that both procedures are asymptotically valid.

\subsection{Extensions to Dickey-Fuller $t$-processes}

Inference on the AR parameter in the unit root case is often based on the
$ t $-statistic associated with $ D_T $, which gives rise to Dickey-Fuller $t$-processes.
The Dickey-Fuller $ t $-statistic, $ t_{DF} $, associated with
$ D_T = T( \widehat{\rho}_T - 1 ) $, is the standard computer output quantity when
running a regression of $ Y_t $ on $ Y_{t-1} $. For a sample
$ Y_1, \dots, Y_T $, the statistic $ t_{DF} $ is defined as
$$
  t_{DF} = (\widehat{\rho}_T -1) / \widehat{\xi}_T = T (\widehat{\rho}_T - 1) / (T \widehat{\xi}_T)
$$
where
$$
 \widehat{\xi}_T = \left\{ s_T^2 \Big/ \sum_{t=1}^{T} Y_{t-1}^2 \right\}^{1/2}
$$
with $ s_T^2 = (T-1)^{-1} \sum_{t=1}^T (Y_t - \widehat{\rho}_T Y_{t-1})^2 $.

The formula for $ t_{DF} $ motivates to scale $ D_T $ analogously.
Hence, let us define the weighted $t$-type DF process by
\begin{equation}
\label{DefDFT}
  \widetilde{D}_T(s) = D_T(s) / ( \trunc{Ts} \widehat{\xi}_{\trunc{Ts}} ), \qquad s \in (0,1],
\end{equation}
and $ \widetilde{D}_T(0) = 0 $. $ \widetilde{D}_T(s) $ is a weighted
version of $ t_{DF} $ calculated using the observations $ Y_1,
\dots, Y_{\trunc{Ts}} $, and attaching kernel weights $ K( (
\trunc{Ts} - t )/h ) $ to the $ t$th summand in the numerator. The
associated detection rule for known $ \vartheta $ is defined as
$$
  \widetilde{S}_T
    = \widetilde{S}_T(c) = \inf \{ k \le t \le T : \widetilde{D}_T( t/T ) < c(\vartheta) \}
$$
with $ c( \vartheta ) $ such that
$ \lim_{T \to \infty} P_0( \widetilde{S}_T( c( \vartheta ) )  \le T ) = \alpha $.

Again, it turns out that the asymptotic limit of $ \widetilde{D}_T $ depends
on the nuisance parameter $ \vartheta $.
The weighted $t$-type DF control chart with estimated control limits is defined as
$$
  \widehat{ \widetilde{S} }_T =
    \inf \{ k \le t \le T : \widetilde{D}_T( k/T ) < c( \widehat{\vartheta}_t ) \}.
$$

Alternatively, one can transform the process to achieve that the asymptotic limit
is invariant with respect to $ \vartheta $. We define
\begin{equation}
\label{DefTransT}
  \widetilde{E}_T(s) = \frac{ S_{\trunc{Ts}} }{ \widehat{\eta}_{\trunc{Ts}} }
  \widetilde{D}_T(s)
    -
  \frac{
     \frac{ \widehat{\eta}_{\trunc{Ts}}^2 - \widehat{\sigma}^2_{\trunc{Ts}} }{ 2 \trunc{Ts} }
     \sum_{t=1}^{\trunc{Ts}} K( (\trunc{Ts}-t)/h )
  }
  {
    \widehat{\eta}_{\trunc{Ts}} \sqrt{ \trunc{Ts}^{-2} \sum_{t=1}^{\trunc{Ts}} Y_{t-1}^2 }
  },
  \qquad s \in (0,1].
\end{equation}
We will show that the detection rule
$$
  \widetilde{Z}_T = \inf\{ k \le t \le T : \widetilde{E}_T( t/T ) < c(1) \}
$$
has asymptotic type I error equal to $ \alpha $ for all $ \vartheta $.

\section{Asymptotic results for random walks}
\label{AsH0}

In this section we provide functional central limit theorems for the
Dickey-Fuller processes defined in the previous section under a
random walk model assumption corresponding to the null hypothesis $
H_0 : \rho = 1 $ in model (\ref{ARModel}), and the related central
limit theorem for the associated stopping rules. These results can
be used to design tests and detection procedures having well-defined
statistical properties under the null hypothesis.

\subsection{Weighted Dickey-Fuller processes}

We start with the following functional central limit theorem providing the
limit distribution of the weighted DF process $ D_T(s)$, $ s \in [0,1] $, which
extends Phillips (1987, Th. 3.1 c).

\begin{theorem}
\label{ThBasic} Assume the time series  $ \{ Y_t \}$ satisfies model
(\ref{ARModel}) with $ \rho = 1 $ such that (E1) and (K1)-(K3) hold.
Then
$$
  D_T(s) \Rightarrow \calD_\vartheta(s), \qquad \mbox{in $(D[\kappa,1],d)$},
$$
as $ T \to \infty $, where the stochastic process
\begin{equation}
\label{DefD}
  \calD_\vartheta(s) =
    \frac{ \frac{s}{2} \left\{ K(0) B(s)^2 + \zeta \int_0^s B(r)^2 K'( \zeta(s-r) ) \, dr
           - \vartheta^{-2} \int_0^s K( \zeta( s-r ) ) \, dr \right\} }
    { \int_0^s B^2(r) \, dr },
\end{equation}
$ s \in (0,1] $, $ \calD_\vartheta(0)=0$, is continuous w.p. $1$.
\end{theorem}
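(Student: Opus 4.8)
The plan is to treat the numerator $N_T(s)=\trunc{Ts}^{-1}\sum_{t=1}^{\trunc{Ts}}Y_{t-1}\epsilon_t K((\trunc{Ts}-t)/h)$ and the denominator $V_T(s)=\trunc{Ts}^{-2}\sum_{t=1}^{\trunc{Ts}}Y_{t-1}^2$ of $D_T(s)$ as two processes in $D[\kappa,1]$ (under $\rho=1$ we have $\Delta Y_t=\epsilon_t$), to establish their joint weak convergence to limits $N$ and $V$, and then to obtain $D_T=N_T/V_T\Rightarrow N/V=\calD_\vartheta$ from the continuous mapping theorem. The division map is continuous at the limit because, for $\kappa>0$, the limit $V(s)=\eta^2 s^{-2}\int_0^s B(r)^2\,dr$ is a.s.\ bounded away from $0$ on $[\kappa,1]$ (as $\int_0^s B^2>0$ a.s.\ for $s\ge\kappa$). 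Joint convergence is not an extra difficulty: after the reduction below, both $N_T$ and $V_T$ are, up to uniformly negligible remainders, continuous functionals of the single partial-sum process $X_T(r)=T^{-1/2}Y_{\trunc{Tr}}=T^{-1/2}\sum_{i\le\trunc{Tr}}\epsilon_i$, which by (\ref{FCLT1}) satisfies $X_T\Rightarrow\eta B$.

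The denominator is classical: writing $V_T(s)=(T/\trunc{Ts})^2\cdot T^{-1}\sum_{t=1}^{\trunc{Ts}}X_T((t-1)/T)^2$ and recognising a Riemann sum gives $V_T(s)\Rightarrow\eta^2 s^{-2}\int_0^s B(r)^2\,dr$ by (\ref{FCLT1}) and the continuous mapping theorem.

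The numerator is the heart of the argument. The key is the algebraic identity $Y_{t-1}\epsilon_t=\tfrac12(Y_t^2-Y_{t-1}^2-\epsilon_t^2)$, valid since $Y_t=Y_{t-1}+\epsilon_t$. Abel summation applied to the weighted telescoping sum $\sum_{t=1}^{n}(Y_t^2-Y_{t-1}^2)K((n-t)/h)$, with $n=\trunc{Ts}$ and $Y_0=0$, yields the boundary term $Y_n^2 K(0)$ plus $\sum_{t=1}^{n-1}Y_t^2\big(K((n-t)/h)-K((n-t-1)/h)\big)$. A first-order Taylor expansion, using (K2), replaces the kernel difference by $h^{-1}K'((n-t)/h)$ with a remainder of order $h^{-2}$; since $\sum_t Y_t^2=O_p(T^2)$ this remainder is $O_p(T^2/h^2)=O_p(\zeta^2)$ by (\ref{AssZeta}), hence uniformly negligible after division by $\trunc{Ts}$. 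Invoking $T/h_T\to\zeta$ and $X_T\Rightarrow\eta B$, the two surviving pieces converge, after the appropriate scaling, to $\eta^2 B(s)^2 K(0)$ and $\zeta\eta^2\int_0^s B(r)^2 K'(\zeta(s-r))\,dr$. For the quadratic term we split $\epsilon_t^2=(\epsilon_t^2-\sigma^2)+\sigma^2$: the centred part is $O_p(\sqrt T)$ and negligible, which is where (\ref{FCLT2}) and the summability $\sum_t\Cov(\epsilon_1^2,\epsilon_{1+t}^2)<\infty$ enter, while the mean part contributes $\sigma^2\sum_{t=1}^n K((n-t)/h)\approx\sigma^2 T\int_0^s K(\zeta(s-r))\,dr$; using $\sigma^2=\eta^2\vartheta^{-2}$ this becomes the last term in (\ref{DefD}). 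Collecting the pieces and dividing by $\trunc{Ts}$ gives $N_T(s)\Rightarrow\frac{\eta^2}{2s}\{K(0)B(s)^2+\zeta\int_0^s B^2(r)K'(\zeta(s-r))\,dr-\vartheta^{-2}\int_0^s K(\zeta(s-r))\,dr\}$; forming the ratio $N/V$ cancels $\eta^2$ and produces exactly $\calD_\vartheta(s)$.

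Finite-dimensional convergence follows from the above reduction together with the joint invariance principles for the partial sums of $\{\epsilon_t\}$ and $\{\epsilon_t^2\}$. The main obstacle is tightness of $N_T$ in $D[\kappa,1]$: because $s$ enters both the summation range and the kernel argument $(\trunc{Ts}-t)/h$ of every summand, an increment $N_T(s_2)-N_T(s_1)$ is a full re-weighting rather than a simple truncation. I would control the relevant increments through $\sum_t|K((n_2-t)/h)-K((n_1-t)/h)|$, bounded via the bounded variation of $K$ in (K3), combined with the fourth-moment and strong-mixing conditions of (E2), so as to obtain a moment bound on increments sufficient for a standard tightness criterion in $D[\kappa,1]$. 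Finally, continuity of $\calD_\vartheta$ with probability one follows from the a.s.\ continuity of $B$, the smoothing integrals, and $\int_0^s B^2>0$ for $s\ge\kappa>0$.
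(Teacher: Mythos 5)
Your decomposition and computations are exactly those of the paper: the identity $Y_{t-1}\epsilon_t=\tfrac12(Y_t^2-Y_{t-1}^2-\epsilon_t^2)$, Abel summation (the paper does it as integration by parts of a Stieltjes integral) producing the boundary term $K(0)$-piece and the $K'$-integral, the split $\epsilon_t^2=(\epsilon_t^2-\sigma^2)+\sigma^2$, and the Riemann-sum treatment of the denominator all match. But your final step contains a genuine gap: your tightness argument for $N_T$ invokes the moment and strong-mixing conditions of (E2), and (E2) is \emph{not} among the hypotheses of this theorem --- it assumes only (E1) and (K1)--(K3), and (E1) contains no mixing coefficients at all, so the increment moment bound you propose cannot be run under the stated assumptions. (The mixing-based tightness machinery is used in the paper only later, in the proof of the theorem on estimated control limits, where (E2) is explicitly assumed.)

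The gap is avoidable, and in fact your own first paragraph contains the fix that the paper uses: once the reduction shows that, up to remainders that are $o_P(1)$ \emph{uniformly} in $s\in[\kappa,1]$ (this is what (K2) buys), both $N_T$ and $V_T$ are continuous functionals of the single process $U_T(r)=T^{-1/2}Y_{\trunc{Tr}}$ --- e.g.\ $x\mapsto\bigl(s\mapsto \frac{K(0)x^2(s)}{2s}+\frac{\zeta}{2s}\int_0^s x^2(r)K'(\zeta(s-r))\,dr\bigr)$ is continuous into $C[\kappa,1]$ --- the continuous mapping theorem applied to $U_T\Rightarrow\eta B$ delivers weak convergence of the pair (the paper checks joint convergence via arbitrary linear combinations $\lambda_1(\widetilde V_T-\widetilde R_T)+\lambda_2\widetilde W_T$) and hence of the ratio, as a process. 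Tightness of $N_T$ is inherited from $U_T$ through this map; no separate fidi-plus-tightness argument, and no (E2), is needed. Two smaller points: the uniform (over $s$) negligibility of the centred $\epsilon_t^2$ part does require an argument beyond the pointwise $O_P(\sqrt T)$ you assert --- the paper integrates by parts against $Z_T(r)=T^{-1/2}\sum_{i\le\trunc{Tr}}(\epsilon_i^2-\sigma^2)$, uses a Skorokhod representation, and exploits that the functions $r\mapsto K((\trunc{Ts}-\trunc{Tr})/h)$ have uniformly bounded variation (this is where (K3) actually enters) to get $O_P(T^{-1/2})$ uniformly; and you should not appeal to a \emph{joint} invariance principle for $(\epsilon_t,\epsilon_t^2)$, which (E1) does not provide --- fortunately none is needed, since the $\epsilon_t^2$ contribution converges in probability to the deterministic function $\mu(s)=\frac{\sigma^2}{2s}\int_0^s K(\zeta(s-r))\,dr$, so it joins the limit by a Slutsky-type argument.
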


\begin{remark}
  Note that the asymptotic limit is distribution-free if and only if
  $ \eta = \sigma $ which holds if the error terms are uncorrelated. Otherwise,
  the distribution of $ \calD_\vartheta $ depends sensitively on $ \vartheta $.
\end{remark}

\begin{proof} If $ \rho = 1 $ we have
 $ \epsilon_t = \Delta Y_t $ and
$ Y_{t-1} \epsilon_t = (1/2) (Y_t^2 - Y_{t-1}^2 - \epsilon_t^2) $
for all $ t $. This yields the representation
$$
  D_T(s) = \frac{ \widetilde{V}_T(s) - \widetilde{R}_T(s)}{ \widetilde{W}_T(s) },
  \qquad s \in (0,1],
$$
where the $ D[0,1] $-valued stochastic processes $ \widetilde{V}_T $,
$ \widetilde{R}_T $, and $ \widetilde{W}_T $  are given by
\begin{eqnarray*}
  \widetilde{V}_T(s) & = &
    (2\trunc{Ts})^{-1} \sum_{t=1}^{ \trunc{Ts} } ( Y_t^2 - Y_{t-1}^2 ) K( ( \trunc{Ts} - t)/h ), \\
  \widetilde{R}_T(s) & = &
    (2\trunc{Ts})^{-1} \sum_{t=1}^{ \trunc{Ts} } \epsilon_t^2 K( (\trunc{Ts}-t)/h ), \\
  \widetilde{W}_T(s)
    & = & \trunc{Ts}^{-2} \sum_{t=1}^{ \trunc{Ts} } Y_{t-1}^2
\end{eqnarray*}
for $ s \in (0,1] $.
Let us first show that
\begin{equation}
\label{Rem1}
  \sup_{s \in [\kappa,1]} | \widetilde{R}_T(s) - \mu(s) |
  \stackrel{P}{\to} 0,
\end{equation}
as $ T \to \infty $, where
$$
  \mu(s) = \frac{\sigma^2}{2 s} \int_0^s K( \zeta(s-r) ) \, dr, \qquad s \in (0,1].
$$
Consider
$$
  | E( \widetilde{R}_T(s) ) - \mu(s) |
    = \frac{\sigma^2}{2}
      \left| \frac{1}{\trunc{Ts}} \sum_{t=1}^{ \trunc{Ts} } K( (\trunc{Ts}-t)/h )
        - s^{-1} \int_0^s K(\zeta(s-r)) \, dr \right|.
$$
(\ref{AssZeta}) ensures that
$ \sup_{s \in [\kappa,1]} \max_i | (\trunc{Ts} - i)/h - \zeta(s-i/T) | = o(1) $ yielding
\begin{eqnarray*}
  \frac{1}{\trunc{Ts}} \sum_{t=1}^{\trunc{Ts}} K( ( \trunc{Ts} - t ) /h )
  &=& \frac{1}{\trunc{Ts}} \sum_{t=1}^{\trunc{Ts}} K( \zeta(s-t/T) ) + o(1) \\
  &=& s^{-1} \int_0^s K( \zeta(s-r) ) \, dr + o(1),
\end{eqnarray*}
uniformly in $ s \in [\kappa,1] $, because $K$ is Lipschitz
continuous and of bounded variation, cf. Theorem 3.3(ii) of Steland
(2004). It remains to estimate $ | \widetilde{R}_T(s) - E(
\widetilde{R}_T(s) ) | $. The assumptions on $ \{ \epsilon_t \} $
ensure that
$$
  Z_T(r) = T^{-1/2} \sum_{i=1}^{ \trunc{Tr} } ( \epsilon_i^2 - E \epsilon_i^2 )
  \Rightarrow \rho B^{(2)}(r)
$$
as $ T \to \infty $, where $ \rho^2 = \Var(\epsilon_1^2) + 2
\sum_{t=1}^\infty \Cov( \epsilon_1^2, \epsilon_{1+t}^2 ) $. Hence,
eventually for equivalent versions, we may assume that $ \| Z_T -
\rho B^{(2)} \|_\infty \to 0 $ a.s., for $ T \to \infty $. By (K3)
the Stieltjes integrals $ \int_0^s K(\zeta(s-r)) \, dB^{(2)}(r) $
and $ \int_0^s K(\zeta(s-r)) \, d Z_T(r) $ are well defined (via
integration by parts), and
$$
  \sup_{s \in [\kappa,1]} \left| \int_0^s K( \zeta(s-r) ) \, d Z_T(r)  - \rho \int_0^s K( \zeta(s-r) ) \, d B^{(2)}(r) \right|
  \stackrel{a.s.}{\to} 0,
$$
as $ T \to \infty $. Obviously,
\begin{eqnarray*}
  && \sup_{s \in [\kappa,1]} | \widetilde{R}_T(s) - E \widetilde{R}_T(s) |
    = \sup_{s \in [\kappa,1]} \frac{\sqrt{T}}{\trunc{Ts}} \left|
      \int_0^s K( ( \trunc{Ts} - \trunc{Tr} )/h ) d Z_T(r) \right| \\
    && \le \sup_{s \in [\kappa,1]} \frac{ \rho \sqrt{T}}{ \trunc{Ts} }
            \left| B^{(2)}(r) K\bigl[ \frac{ \trunc{Ts} - \trunc{Tr} }{h} \bigr] \bigr|_{r=0}^{r=s} - \int_0^s B^{(2)}(r) K( (\trunc{Ts}-\trunc{T(dr)})/h )  \right| \\
    && \quad + \sup_{s \in [\kappa,1]} \frac{ \sqrt{T} }{ \trunc{Ts} }
            \biggl| [ Z_T(r) - \rho B^{(2)}(r) ] K( ( \trunc{Ts} - \trunc{Tr})/h
            ) \bigr|_{r=0}^{r=s} \\
    && \qquad \quad - \int_0^s [Z_T(r) - \rho B^{(2)}(r)] K( ( \trunc{Ts} - \trunc{ T(dr) } ) / h )
    \biggr|.
\end{eqnarray*}
Noting that the total variation of the functions $ r \mapsto K(
[\trunc{Ts} - \trunc{Tr}]/h ) $, $ s \in [\kappa,1] $, $ T \ge 1 $,
is uniformly bounded, the right side of the above display can be
estimated by
\begin{eqnarray*}
    &&  O\left( \frac{\kappa^{-1}}{\sqrt{T}} \| B^{(2)} \|_\infty \| K \|_\infty \right) + O\left( \frac{\kappa^{-1}}{\sqrt{T}} \| B^{(2)} \|_\infty \int |d K|
    \right) \\
    && \quad + O\left( \frac{\kappa^{-1}}{\sqrt{T}} \| Z_T - \rho B^{(2)} \|_\infty \| K \|_\infty \right) + O\left( \frac{\kappa^{-1}}{\sqrt{T}} \| Z_T - \rho B^{(2)} \|_\infty \int |d K| \right) \\
    && = O_P(1/\sqrt{T}) = o_P(1).
\end{eqnarray*}
Therefore,  (\ref{Rem1}) holds true. Let us now consider $
\widetilde{V}_T $. We will first show that, up to terms of order $
o_P(1) $, $ \widetilde{V}_T $ is a functional of
$$
  U_T(r) = T^{-1/2} Y_{ \trunc{Tr} }, \qquad r \in [0,1].
$$
Again, under the assumptions of the theorem, $ U_T $  converges weakly to $ \eta B $, where
$ B $ denotes Brownian motion and $ \eta > 0 $ is a constant. For brevity of
notation let
$$
  k_T(r;s) = K( (\trunc{Ts} - \trunc{Tr})/ h ), \qquad r,s \in [0,1].
$$
Integration by parts yields
\begin{eqnarray*}
  \widetilde{V}_T(s) &=&
    \frac{1}{2\trunc{Ts}}
      \sum_{t=1}^{ \trunc{Ts} } ( Y_t^2 - Y_{t-1}^2 ) K( ( \trunc{Ts} - t)/h ) \\
    &=& \frac{T}{ 2 \trunc{Ts} }
          \int_0^s k_T(r;s) \, d (T^{-1/2} Y_{\trunc{Tr}})^2 \\
    &=& \frac{T}{2 \trunc{Ts}} \left(
          k_T(r;s) U_T^2(r) \biggr|_{r=0}^{r=s}
          - \int_0^s U_T^2(r) \, k_T( dr;s)
        \right) \\
    &=& \frac{ K(\zeta(s-r)) }{ 2s } U_T^2(r) \biggr|_{r=0}^{r=s}
       + \frac{\zeta}{2s} \int_0^s U_T^2(r) K'( \zeta(s-r) ) \, dr + o_P(1) \\
     &=& \frac{ \eta^2 K(0) B^2(s) }{ 2 s }
       + \frac{ \zeta }{ 2s } \int_0^s U_T^2(r) K'( \zeta (s-r) ) \, dr + o_P(1).
\end{eqnarray*}
Due to (K2) the $ o_P(1) $ term is uniform in $ s \in [\kappa,1] $.
Next note that
$$
  \widetilde{W}_T(s) = \left( \frac{T}{\trunc{Ts}} \right)^2
    \int_0^s U_T^2(r) \, dr.
$$
We are now in a position to verify joint weak convergence of numerator and
denominator of $ D_T $.
The Lipschitz continuity of $K$ ensures that up to terms of order $ o_P(1) $
for all $ (\lambda_1,\lambda_2) \in \R^2 $ the linear combination
$
  \lambda_1 ( \widetilde{V}_T(s) - \widetilde{R}_T(s) ) + \lambda_2 \widetilde{W}_T(s)
$
is a functional of $ U_T $, and that functional is continuous. Therefore, the
continuous mapping theorem (CMT) entails
weak convergence to the stochastic process
\begin{eqnarray*}
 &&   \lambda_1 \left[ \frac{ \eta^2 K(0) B^2(s) }{2s}
    + \frac{\eta^2 \zeta}{2s} \int_0^s K'(\zeta(s-r)) B^2(r) \, dr
    - \frac{\sigma^2}{2s} \int_0^s K(\zeta(s-r)) \, dr
    \right] \\
 && \quad
    + \lambda_2 \frac{\eta^2}{s^2} \int_0^s B(r)^2 \, dr.
\end{eqnarray*}
This verifies joint weak convergence of
$ (\widetilde{V}_T - \widetilde{R}_T, \widetilde{W}_T) $.
Hence, the result follows by the CMT. (K2) also ensures that $ \calD_\vartheta \in C[0,1] $ w.p. $1$.
\end{proof}

The central limit theorem (CLT) for the detection procedure $ S_T $, which requires knowledge of $ \vartheta $,
appears as a corollary.

\begin{corollary}
\label{CorST}
Under the assumptions of Theorem~\ref{ThBasic} we have for any
control limit $ c < 0 $
$$
  S_T / T \stackrel{d}{\to} \inf \{ s \in [\kappa,1] : \calD_\vartheta(s) < c \}
$$
as $ T \to \infty $, where $ \calD_\vartheta(s) $ is defined in (\ref{DefD}).
\end{corollary}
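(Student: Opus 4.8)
The plan is to realise $S_T/T$ as the image of $D_T$ under a first-passage functional and to invoke the continuous mapping theorem, using the weak convergence $D_T \Rightarrow \calD_\vartheta$ from Theorem~\ref{ThBasic}. Since $D_T(s)$ depends on $s$ only through $\trunc{Ts}$, the path $s \mapsto D_T(s)$ is constant on each interval $[t/T,(t+1)/T)$; consequently, for fixed $x \in [\kappa,1]$,
$$
  \{ S_T/T \le x \} = \Bigl\{ \min_{k \le t \le \trunc{Tx}} D_T(t/T) < c \Bigr\} = \Bigl\{ \inf_{s \in [\kappa,x]} D_T(s) < c \Bigr\},
$$
so that $S_T/T = \tau_c(D_T)$ with $\tau_c(f) = \inf\{ s \in [\kappa,1] : f(s) < c \}$ under the convention $\inf\emptyset = \infty$. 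It therefore suffices to prove $\tau_c(D_T) \stackrel{d}{\to} \tau_c(\calD_\vartheta)$.

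First I would pass from Skorokhod to uniform convergence. Because $\calD_\vartheta$ has continuous paths w.p.~$1$ (Theorem~\ref{ThBasic}), the Skorokhod representation theorem provides versions on a common probability space converging a.s.\ in $(D[\kappa,1],d)$; as the limit is continuous this upgrades to $\| D_T - \calD_\vartheta \|_\infty \to 0$ a.s. For such uniformly convergent sequences the running-infimum map $f \mapsto \inf_{s \in [\kappa,x]} f(s)$ is continuous, since $|\inf_{[\kappa,x]} f - \inf_{[\kappa,x]} g| \le \| f - g \|_\infty$, whence $\inf_{[\kappa,x]} D_T \to \inf_{[\kappa,x]} \calD_\vartheta$ a.s., and hence in distribution for the original sequence. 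Combining this with the identity above and the portmanteau theorem gives
$$
  P_0( S_T/T \le x ) = P_0\Bigl( \inf_{[\kappa,x]} D_T < c \Bigr) \longrightarrow P\Bigl( \inf_{[\kappa,x]} \calD_\vartheta < c \Bigr) = P( \tau_c(\calD_\vartheta) \le x ),
$$
valid at every $x$ for which $c$ is a continuity point of the law of $\inf_{[\kappa,x]}\calD_\vartheta$, i.e.\ $P( \inf_{[\kappa,x]} \calD_\vartheta = c ) = 0$, and for which $\{\tau_c(\calD_\vartheta) \le x\}$ and $\{\inf_{[\kappa,x]}\calD_\vartheta < c\}$ agree up to a null set; both hold off an at most countable set of $x$, using the a.s.\ continuity of $\calD_\vartheta$.

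The hard part will be verifying this atomlessness, equivalently that $\calD_\vartheta$ crosses the level $c$ cleanly rather than touching it tangentially from above. I would establish $P(\inf_{[\kappa,x]}\calD_\vartheta = c)=0$ by exploiting the structure $\calD_\vartheta(s) = N(s)/\int_0^s B^2$, whose denominator is a.s.\ strictly positive for $s \ge \kappa > 0$, so that $\{\calD_\vartheta(s) < c\} = \{ N(s) - c\int_0^s B^2(r)\,dr < 0\}$; the numerator $N$ is a nondegenerate continuous functional of Brownian motion, and I would argue from the nondegeneracy of the finite-dimensional Gaussian-type distributions of $B$ that the running minimum of the continuous process $G(s) := N(s) - c\int_0^s B^2$ has no atom at $0$ for all but countably many $x$. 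Granting this, the displayed convergence holds at every continuity point of the limiting distribution function, which is precisely convergence in distribution of $S_T/T$ to $\inf\{ s \in [\kappa,1] : \calD_\vartheta(s) < c \}$, as claimed. The remaining routine point is that replacing the continuous-time infimum over $[\kappa,1]$ by the discrete minimum over the grid $\{k,\dots,T\}$, and $k/T$ by $\kappa$, introduces only negligible discrepancies because $\trunc{T\kappa}/T \to \kappa$ and $\calD_\vartheta$ is uniformly continuous on $[\kappa,1]$.
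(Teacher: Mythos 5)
Your skeleton is exactly the paper's: the event identity $\{S_T > x\} \Leftrightarrow \inf_{s \in [\kappa,x]} D_T(s) \ge c$, the Skorokhod/Dudley/Wichura representation to upgrade weak convergence to a.s.\ uniform convergence (available because $\calD_\vartheta$ is a.s.\ continuous), the Lipschitz continuity of the running-infimum functional, and the negligibility of the grid discrepancies all mirror the published proof. But the step you yourself call ``the hard part'' --- that the law of $\inf_{s \in [\kappa,x]} \calD_\vartheta(s)$ has no atom at the level $c$ --- is precisely where the paper's proof has its real content, and your proposal only promises it (``I would argue from the nondegeneracy of the finite-dimensional Gaussian-type distributions of $B$ \dots''). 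That sketch does not work as stated: $\calD_\vartheta$ is a non-Gaussian ratio of integral functionals of $B$, and nondegeneracy of the fidis of $B$ says nothing directly about atoms of the law of a running infimum of such a functional. Nor can the exceptional set of horizons be dismissed as countable by soft arguments: with $g(x) = P(\inf_{[\kappa,x]}\calD_\vartheta < c)$ and $h(x) = P(\inf_{[\kappa,x]}\calD_\vartheta \le c)$, the atom $h(x)-g(x)$ is a difference of two nondecreasing functions and can in principle be positive on an uncountable set --- if the process touches level $c$ and then stays above it on a time interval, every horizon $x$ in that interval is bad. Ruling this out requires an actual absolute-continuity theorem for suprema of continuous processes; the paper supplies it by citing Lifshits (1982) and arguing along the lines of Theorem~4.1 of Steland (2004).

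A telling symptom of the gap is that your argument never uses the hypothesis $c < 0$, which both the corollary statement and the paper's proof explicitly require. Lifshits-type results yield absolute continuity of the distribution of $\sup_{s \in [\kappa,x]} \bigl( -\calD_\vartheta(s) \bigr)$ except possibly at a single point, and the sign restriction on $c$ is what keeps the level $-c > 0$ away from that possible atom; an argument that would go through verbatim for every $c$ proves more than the paper claims and therefore cannot be complete. In short: your reduction, representation step, and continuous-mapping machinery are correct and essentially identical to the paper's, but the atomlessness of $\inf_{[\kappa,x]}\calD_\vartheta$ at $c$ --- the one nontrivial ingredient --- is asserted rather than proved, and the proposed route to it is not viable.
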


\begin{proof} Observe that by definition of $ S_T $
$$
  S_T > x \Leftrightarrow \inf_{s \in [\kappa,x]} D_T(s) \ge c
          \Leftrightarrow \sup_{s \in [\kappa,x]} -D_T(s) \le -c
$$
for any $ x \in \R $. Hence it suffices to show that
$$
  P( \sup_{s \in [\kappa,x]} -D_T(s) \le -c )
    \to P( \sup_{s \in [\kappa,x]} -\calD_\vartheta(s) \le -c ),
$$
where $ \calD_\vartheta $ denotes the limit process given in Theorem~\ref{ThBasic}.
Using the Skorokhod/Dudley/ Wichura representation theorem and a result
due to Lifshits (1982),
this fact can be shown along the lines of the proof of Theorem~4.1 in
Steland (2004), if $ c < 0 $, since $ \calD_\vartheta \in C[0,1] $ a.s.
For brevity we omit the details.
\end{proof}

Let us now show consistency of the detection procedure
$ \widehat{S}_T = \inf \{ k \le t \le T : D_T(t/T) < c( \widehat{\vartheta}_t ) \} $,
which uses estimated control limits.

\begin{theorem}
\label{ThEstCL}
  Assume (E1) and (E2), (K1)-(K3), and in addition that the lag truncation parameter, $m$,
  of the Newey-West estimator satisfies
  $$
    m = o( T^{1/2} ), \qquad T \to \infty.
  $$
  Then the weighted Dickey-Fuller type control chart with estimated control limit,
  $ \widehat{S}_T $, is consistent, i.e.,
  $$
    P( \widehat{S}_T \le T ) \to \alpha,
  $$
  as $ T \to \infty $.
\end{theorem}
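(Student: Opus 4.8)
The plan is to compare the data-driven procedure $\widehat{S}_T$ with the fixed-limit procedures $S_T(c(\vartheta)\pm\varepsilon)$, whose asymptotic sizes are delivered by Corollary~\ref{CorST}, and to squeeze $P(\widehat{S}_T \le T)$ between them. Two ingredients make the squeeze work: (i) a uniform-in-$t$ consistency statement for the Newey--West estimator $\widehat{\vartheta}_t$ over the monitoring range, and (ii) continuity of $\vartheta \mapsto c(\vartheta)$ together with atomlessness of the law of $\inf_{s\in[\kappa,1]}\calD_\vartheta(s)$.

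First I would establish that $\sup_{k \le t \le T} | \widehat{\vartheta}_t - \vartheta | \stackrel{P}{\to} 0$. Since $ k = \trunc{T\kappa} $, every index in the range uses a window of length $t$ of order $T$, so each $\widehat{\vartheta}_t$ is individually consistent by the classical Newey--West/Andrews theory under $ m = o(T^{1/2}) $; the work lies in the uniformity over the $\Theta(T)$ values of $t$. For the variance estimate I would write $\widehat{\sigma}^2_t = t^{-1}\sum_{s=1}^t \epsilon_s^2$ (valid since $\Delta Y_s = \epsilon_s$ when $\rho=1$) and note that (\ref{FCLT2}) forces $\sup_{s\in[\kappa,1]} | T^{-1}\sum_{i\le\trunc{Ts}}(\epsilon_i^2 - E\epsilon_i^2) | \stackrel{P}{\to} 0$, whence $\sup_{k\le t\le T}|\widehat{\sigma}^2_t - \sigma^2|\stackrel{P}{\to}0$ because $t/T$ is bounded away from $0$ on this range. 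For the autocovariance estimates $\widehat{\gamma}_t(i)$ I would combine an analogous uniform law of large numbers for the stationary mixing sequences $\{\Delta Y_s \Delta Y_{s-i}\}$ with the summability of $\gamma(i)$ guaranteed by (E2); the truncation $m=o(T^{1/2})$ then lets me bound $\sup_{k\le t\le T}\bigl|\sum_{i=1}^m w(m,i)\widehat{\gamma}_t(i) - \sum_{i\ge1}\gamma(i)\bigr|$ uniformly, controlling simultaneously the truncation bias and the accumulated estimation error.

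Given this uniform consistency, continuity of $c(\cdot)$ at $\vartheta$ (which follows from the smooth dependence of $\calD_\vartheta$ on $\vartheta$ through the single term $-\vartheta^{-2}\int_0^s K(\zeta(s-r))\,dr$, plus atomlessness of the quantile) yields $\sup_{k\le t\le T}|c(\widehat{\vartheta}_t)-c(\vartheta)|\stackrel{P}{\to}0$, so for fixed $\varepsilon>0$ the event $G_{T,\varepsilon}=\{c(\vartheta)-\varepsilon \le c(\widehat{\vartheta}_t)\le c(\vartheta)+\varepsilon \text{ for all } k\le t\le T\}$ satisfies $P(G_{T,\varepsilon})\to1$. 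On $G_{T,\varepsilon}$ the monotonicity of the signal rule in the control limit gives
$$
  \{ S_T(c(\vartheta)-\varepsilon)\le T \}
  \subseteq \{ \widehat{S}_T\le T \}
  \subseteq \{ S_T(c(\vartheta)+\varepsilon)\le T \},
$$
since a smaller (more negative) limit makes a signal harder and a larger one makes it easier. Hence
$$
  P( S_T(c(\vartheta)-\varepsilon)\le T ) - P(G_{T,\varepsilon}^c)
  \le P(\widehat{S}_T\le T)
  \le P( S_T(c(\vartheta)+\varepsilon)\le T ) + P(G_{T,\varepsilon}^c).
$$
Letting $T\to\infty$ and invoking Corollary~\ref{CorST} (note $c(\vartheta)\pm\varepsilon<0$ for small $\varepsilon$, since $c(\vartheta)<0$) turns the bounds into $G_\vartheta(c(\vartheta)-\varepsilon)$ and $G_\vartheta(c(\vartheta)+\varepsilon)$, where $G_\vartheta(c)=P(\inf_{s\in[\kappa,1]}\calD_\vartheta(s)<c)$.

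Finally I would let $\varepsilon\downarrow0$. Since $\calD_\vartheta\in C[\kappa,1]$ a.s.\ and, by the Lifshits (1982) result already used in Corollary~\ref{CorST}, the law of $\inf_{s\in[\kappa,1]}\calD_\vartheta(s)$ has no atoms, $G_\vartheta$ is continuous; as $c(\vartheta)$ is defined through $G_\vartheta(c(\vartheta))=\alpha$, both bounds converge to $\alpha$, yielding $P(\widehat{S}_T\le T)\to\alpha$. I expect the main obstacle to be the uniform consistency of the Newey--West estimator over the growing window $t\in[\trunc{T\kappa},T]$: pointwise consistency is standard, but controlling the estimation error simultaneously for all $\Theta(T)$ window lengths, with the lag count $m$ itself growing, is precisely the step that consumes the mixing hypothesis (E2), the fourth-moment bounds, and the rate $m=o(T^{1/2})$.
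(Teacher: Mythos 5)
Your overall architecture is viable, but the step you yourself flag as the main obstacle --- ingredient (i), $\sup_{k \le t \le T} |\widehat{\vartheta}_t - \vartheta| \stackrel{P}{\to} 0$ --- is asserted rather than proved, and it is precisely where the theorem's hypotheses are consumed. Saying you would ``combine an analogous uniform law of large numbers for $\{\Delta Y_s \Delta Y_{s-i}\}$ with summability of $\gamma(i)$'' is not yet an argument: the supremum over the $\Theta(T)$ window lengths $t$ of a sum of $m = m_T \to \infty$ lag estimates requires a maximal-inequality-type bound, not $m$ separate pointwise laws of large numbers. The paper fills exactly this hole by proving $\widehat{\vartheta}_{\trunc{T\cdot}} \Rightarrow \vartheta$ in $D[\kappa,1]$ --- weak convergence to a constant, which is equivalent to your uniform consistency statement. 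Pointwise consistency gives the (degenerate) fidis; the real work is tightness, obtained by (a) checking that $\{\epsilon_t \epsilon_{t-j}\}$ inherits strong mixing from $\{\epsilon_t\}$ with $\widetilde{\alpha}(k) \le \alpha(k-j)$, (b) applying Yokoyama's (1980) moment bound to get $E\,\bigl|\sqrt{T}\bigl(\widehat{\gamma}_{\trunc{Ts}}(j) - \widehat{\gamma}_{\trunc{Tr}}(j)\bigr)\bigr|^{2+2\delta} = O(|s-r|^{1+\delta})$, and (c) a triangle inequality across the $m$ Bartlett-weighted lags, which produces the factor $m/\sqrt{T}$ in the increment bound for $\widehat{\eta}_{\trunc{T\cdot}}$ --- the one place where $m = o(T^{1/2})$ actually enters, via the van der Vaart--Wellner tightness criterion. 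You name the right ingredients ((E2), the fourth-moment condition, the rate on $m$), so the gap is fillable by essentially the paper's own computation, but as written the central technical content of the theorem is missing. Your treatment of $\widehat{\sigma}^2_t$ via (\ref{FCLT2}) is fine; for the covariances you would additionally need the small edge-effect bias $E\widehat{\gamma}_t(i) - \gamma(i)$ summed over $i \le m$, and absolute summability of $\gamma$, which itself rests on a mixing covariance inequality under (E2).

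Your endgame, by contrast, is correct and genuinely different from the paper's. The paper establishes joint weak convergence $(D_T(\cdot), \widehat{\vartheta}_{\trunc{T\cdot}}) \Rightarrow (\calD_\vartheta(\cdot), \vartheta)$ (cheap once the estimator limit is degenerate) and applies the continuous mapping theorem to $\varphi(x,y) = \inf_{s \in [\kappa,1]} x(s)/c(y(s))$, which is continuous at pairs of continuous paths, landing directly on $P(\inf_{s \in [\kappa,1]} \calD_\vartheta(s) < c(\vartheta)) = \alpha$ with no $\varepsilon$-limit. Your sandwich between the fixed-limit charts $S_T(c(\vartheta) \pm \varepsilon)$ uses only the marginal convergence of Corollary~\ref{CorST}, monotonicity of the signal rule in the control limit, and then $\varepsilon \downarrow 0$; it is arguably more elementary, at the price of the extra limiting step and the observation $c(\vartheta) \pm \varepsilon < 0$, which you correctly supplied. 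Both routes need continuity of $\vartheta \mapsto c(\vartheta)$ --- the paper proves this via the decomposition $\calD_\vartheta = \calE - \vartheta^{-2}\calF$, convergence of the suprema in distribution, and quantile convergence, matching your sketch --- and both need atomlessness of the law of $\inf_{s \in [\kappa,1]} \calD_\vartheta(s)$ via Lifshits (1982). Neither endgame is where the difficulty of the theorem lies.
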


\begin{proof}
  Note that the equivalence $ \widehat{S}_T > T \Leftrightarrow
  \inf_{s \in [\kappa,1]} D_T(s) / c( \widehat{\vartheta}_{\trunc{Ts}} ) \ge 1  $
  implies
  \begin{equation}
  \label{Stern}
  P( \widehat{S}_T \le T )
  =
  P\left(
    \inf_{s \in [\kappa,1]}
      \frac{D_T(s)}{c( \widehat{\vartheta}_{\trunc{Ts}} )} < 1
  \right).
  \end{equation}
  Let us first show that the function $c$ is continuous. Note that
  the process $ \calD_\vartheta(s) $ can be written as
  $ \calE(s) - \vartheta^{-2} \calF(s) $ for
  a.s. continuous processes $ \calE $ and $ \calF $ not depending on $ \vartheta $,
  where particularly $ \calF(0) = 0 $ and
  $$
    \calF(s) = (s/2) \int_0^s K( \zeta( s-r ) ) \, d r / \int_0^s B^2(r) \, dr,
    \qquad s \in (0,1].
  $$
  Let $ \{ \vartheta^*, \vartheta_n : n \ge 1 \} \subset \R $ be a sequence with
  $ \vartheta_n \to \vartheta^* $, as $ n \to \infty $. Clearly,
  for each $ \omega $ of the underlying probability space with
  $ | \calE(\omega) |, | \calF(\omega) | < \infty $, we have
  $$
  \calD_{\vartheta_n}(\omega) = \calE(\omega) + \vartheta_n^{-2} \calF(\omega) \to
    \calE(\omega) + (\vartheta^*)^{-2} \calF(\omega) = \calD_{\vartheta^*}(\omega), $$ $ n \to \infty $.
  Hence,
  $ \sup_{s \in [\kappa,1]} \calD_{\vartheta_n}(s) \stackrel{d}{\to} \sup_{s \in [\kappa,1]} \calD_{\vartheta^*}(s) $,
  as $ n \to \infty $. Since $ \sup_{s \in [\kappa,1]} \calD_{\vartheta^*}(s) $ has a continuous density,
  this is equivalent to pointwise convergence of the d.f.
  $ F_n(z) = P( \sup_{s \in [\kappa,1]} \calD_{\vartheta_n}(s) \le z ) $ to
  $ F(z) = P( \sup_{s \in [\kappa,1]} \calD_{\vartheta^*}(s) \le z )$, as $ n \to \infty $, for all $ z \in \R $.
  Hence,
  $$
    c( \vartheta_n ) = F_n^{-1}(\alpha) \to F^{-1}(\alpha) = c( \vartheta^* ),
  $$
  as $ n \to \infty $. Next we show
  \begin{equation}
  \label{WCTheta}
    \widehat{\vartheta}_{\trunc{Ts}} \Rightarrow \vartheta,
  \end{equation}
  as $ T \to \infty $, in $ D[\kappa,1] $.
  Since for each $ s \in [\kappa,1] $ we have $ \widehat{\vartheta}_{\trunc{Ts}} \stackrel{P}{\to} \vartheta $, for $ T \to \infty $,
  fidi convergence follows immediately.
  It remains to verify tightness. Recall the definitions
  (\ref{DefNW1}) and (\ref{DefNW2}) and that $ \Delta Y_t = \epsilon_t $ under $ H_0 $.
  Fix $j$ and consider the process $ \widehat{\gamma}_{\trunc{Ts}}(j) $, $ s \in [\kappa,1] $,
  which is a functional of $ \{ \epsilon_t \epsilon_{t-j} : t = j, j+1,\dots \} $.
  Clearly, by the Cauchy-Schwarz inequality and (E1)
  $ E | \epsilon_t \epsilon_{t-j} |^{2+\delta} \le E | \epsilon_t |^{4+2\delta} < \infty $
  for some $ \delta > 0 $.
  Further, since
  $ \widetilde{\calF}_{-\infty}^t = \sigma( \epsilon_s \epsilon_{s-j} : s \le t ) \subset
  \calF_{-\infty}^t  = \sigma( \epsilon_s : s \le t ) $ and
  $ \widetilde{\calF}_{t+k}^\infty = \sigma( \epsilon_s \epsilon_{s-j} : s \ge t + k ) \subset
    \calF_{t+k}^\infty = \sigma( \epsilon_s : s \ge t + k -j ) $,
  the mixing coefficients $ \widetilde{\alpha}(k) $ of $ \{ \epsilon_t \epsilon_{t-j} \} $ satisfy
  \begin{eqnarray*}
    \widetilde{\alpha}(k) &=& \sup_t \sup_{A \in \widetilde{\calF}_{-\infty}^t, B \in \widetilde{\calF}_{t+k}^\infty}
    | P(A \cap B) - P(A)P(B) |  \\
    &\le&
    \sup_t \sup_{A \in \calF_{-\infty}^t, B \in \calF_{t+k-j}^\infty}
    | P(A \cap B) - P(A)P(B) | = \alpha(k-j),
  \end{eqnarray*}
  where $ \{ \alpha(k) \} $ are the mixing coefficients of $ \{ \epsilon_t \} $.
  Due to (E1) we can apply Yokohama (1980,~Th.1) with $ r = 2 + 2 \delta $ to conclude
  that for $ \kappa \le r \le s \le 1 $
  $$
    E \left| T^{-1/2} \sum_{t=\trunc{Tr}+1}^{\trunc{Ts}}
      \epsilon_t \epsilon_{t-j} \right|^{2+2\delta} = O( |s-r|^{1+\delta} ).
  $$
  Now the decomposition
  $$
    \sqrt{T}( \widehat{\gamma}_{\trunc{Ts}}(j) - \widehat{\gamma}_{\trunc{Tr}}(j) )
    = \frac{T}{\trunc{Ts}} \frac{1}{\sqrt{T}} \sum_{t=\trunc{Tr}+1}^{\trunc{Ts}} \epsilon_t \epsilon_{t-j}
    + \left( \frac{T}{\trunc{Ts}} - \frac{T}{\trunc{Tr}} \right)
      \frac{1}{\sqrt{T}} \sum_{t=k}^{\trunc{Tr}} \epsilon_t \epsilon_{t-j}
  $$
  and the triangle inequality yield
  \begin{eqnarray*}
    \| \sqrt{T}( \widehat{\gamma}_{\trunc{Ts}}(j) - \widehat{\gamma}_{\trunc{Tr}}(j) )
    \|_{2+2\delta}
    &=& O( s^{-1} |s-r|^{(1+\delta)/(2+2\delta)} ) + O( |1/s - 1/r|
    r^{(1+\delta)/(2+2\delta)} ) \\
    &=& O( |s-r|^{(1+\delta)/(2+2\delta)} ),
  \end{eqnarray*}
  since, firstly, we may assume $ 0 < \delta < 1 $, and, secondly, both $ s^{-1} $ and
  $ r^{(-1-\delta)/(2+2\delta)} $ are bounded away from $0$ and $ \infty $ for $ 0
  \le r \le s \le 1 $.
  Consequently,
  $$ E( \sqrt{T}( \widehat{\gamma}_{\trunc{Ts}}(j) - \widehat{\gamma}_{\trunc{Tr}}(j) ) )^{2+2\delta} = O( |s-r|^{1+\delta} ), $$
  and therefore Vaart and Wellner (1986, Ex.~2.2.3) implies tightness of the process $ \{ \sqrt{T} \widehat{\gamma}_{\trunc{T\cdot}}(j) : s \in [\kappa,1] \} $ for fixed $j \ge 0
  $. Note that $ \widehat{\gamma}_{\trunc{Ts}}(0) =
\sigma_{\trunc{Ts}}^2 $.
  By the triangle inequality we have
  \begin{eqnarray*}
    \| \sqrt{T}( \widehat{\eta}_{\trunc{Ts}} - \widehat{\eta}_{\trunc{Tr}} ) \|_{2+2\delta}
    &\le&
    2 \sum_{j=0}^m (1-j/m)^{2+2\delta}
      \| \widehat{\gamma}_{\trunc{Ts}}(j) - \widehat{\gamma}_{\trunc{Tr}}(j) \|_{2+2\delta} \\
    &=& O( m |s-r|^{(1+\delta )/(2+\delta )} ),
  \end{eqnarray*}
  yielding
  $$
    E | \widehat{\eta}_{\trunc{Ts}} - \widehat{\eta}_{\trunc{Tr}} |^{2+2\delta}
    = O( (m/T^{1/2})^{2+2\delta} |s-r|^{1+\delta} ).
  $$
  Hence,
  $ \{ (\widehat{\eta}_{\trunc{Ts}}, \widehat{\sigma}_{\trunc{Ts}}^2) : s \in [\kappa,1] \} $
  is tight in the product space, which implies weak convergence of
  $ \{ \widehat{\vartheta}_{\trunc{Ts}} : s \in [\kappa,1] \} $ to $
  \vartheta $.
  The final step is to verify
  \begin{equation}
  \label{WC2}
    \inf_{s \in [\kappa,1]} D_T(s) / c( \widehat{\vartheta}_{\trunc{Ts}} )
    \stackrel{d}{\to} \inf_{s \in [\kappa,1]} \calD_\vartheta(s) / c( \vartheta ),
  \end{equation}
  as $ T \to \infty $, since this implies that (\ref{Stern}) converges to
  $ P( \inf_{s \in [\kappa,1]} \calD_\vartheta(s) < c(\vartheta) ) = \alpha $,
  as $ T \to \infty $.
  Due to (\ref{WCTheta}) we can conclude that
  $$
    ( D_T(\cdot), \widehat{\vartheta}_{\trunc{T \cdot}} )
      \Rightarrow ( \calD_\vartheta(\cdot), \vartheta )
  $$
  in the product space $ ( D[\kappa,1] )^2 $.  Note that the mapping
  $ \varphi : ( D[\kappa,1],d )^2 \to (\R, \calB) $ given by
  $$
    \varphi(x,y) = \inf_{s \in [\kappa,1]} \frac{x(s)}{c(y(s))}, \qquad x,y \in D[\kappa,1], \ y \in \R,
  $$
  is continuous in all $ (x,y) \in (C[\kappa,1])^2 $. Since $ D_\vartheta \in C[0,1] $
  w.p. $1$ and $c \in C(\R) $,  (\ref{WC2}) follows.
\end{proof}

It remains to provide the related weak convergence results for the
transformed process $ E_T $ and its natural detection rule
$ Z_T = \inf\{ k \le t \le T : E_T(t/T) < c \} $.

\begin{theorem}
  Assume (E1),(E2), and (K1)-(K3). Additionally assume that the lag truncation parameter, $m$,
  of the Newey-West estimator satisfies
  $$
    m = o( T^{1/2} ), \qquad T \to \infty.
  $$
  Then,
  $$
    E_T(s) \Rightarrow \calD_1(s), \qquad \mbox{in $ (D[\kappa ,1],d)$}
  $$
  as $ T \to \infty $, and for the transformed Dickey-Fuller type control chart we have
  $$
    Z_T / T \stackrel{d}{\to} \inf\{ \kappa \le t \le 1 : \calD_1(t) < c \}.
  $$
  as $ T \to \infty $.
  Particularly, the asymptotic distributions are invariant with respect to $ \vartheta $.
\end{theorem}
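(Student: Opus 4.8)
The plan is to write $E_T(s) = D_T(s) + C_T(s)$ with correction term
$$
  C_T(s) = \frac{\frac{\widehat{\sigma}^2_{\trunc{Ts}} - \widehat{\eta}^2_{\trunc{Ts}}}{2\trunc{Ts}}\sum_{t=1}^{\trunc{Ts}} K((\trunc{Ts}-t)/h)}{\trunc{Ts}^{-2}\sum_{t=1}^{\trunc{Ts}}Y_{t-1}^2},
$$
and to identify its uniform limit on $[\kappa,1]$. Since Theorem~\ref{ThBasic} already yields $D_T \Rightarrow \calD_\vartheta$ and, more importantly, exhibits $D_T$ as a continuous functional of the process $U_T(r) = T^{-1/2}Y_{\trunc{Tr}}$ up to a uniform $o_P(1)$, it suffices to show that $C_T$ is likewise, up to uniform $o_P(1)$, a continuous functional of the \emph{same} process $U_T$, with weak limit exactly $\calD_1 - \calD_\vartheta$. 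The conclusion $E_T \Rightarrow \calD_1$ then follows by adding the two functionals and applying the continuous mapping theorem (CMT) to $U_T \Rightarrow \eta B$.

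First I would establish that the Newey--West quantities in $C_T$ are uniformly consistent on $[\kappa,1]$, i.e.\ $\sup_{s\in[\kappa,1]}|\widehat{\sigma}^2_{\trunc{Ts}} - \sigma^2| \stackrel{P}{\to} 0$ and $\sup_{s\in[\kappa,1]}|\widehat{\eta}^2_{\trunc{Ts}} - \eta^2| \stackrel{P}{\to} 0$. These are precisely the intermediate facts underlying (\ref{WCTheta}) in the proof of Theorem~\ref{ThEstCL}: the tightness of $\{\widehat{\gamma}_{\trunc{Ts}}(j)\}$ and of $\{\widehat{\eta}_{\trunc{Ts}}\}$ together with their degenerate (constant) finite-dimensional limits gives weak convergence to the constants $\sigma^2$ and $\eta^2$, which for a deterministic limit is equivalent to uniform convergence in probability. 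This is exactly where assumption (E2) and the rate $m = o(T^{1/2})$ are needed, just as in Theorem~\ref{ThEstCL}.

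Next I would handle the remaining factors. The kernel sum satisfies $\trunc{Ts}^{-1}\sum_{t=1}^{\trunc{Ts}}K((\trunc{Ts}-t)/h) \to s^{-1}\int_0^s K(\zeta(s-r))\,dr$ uniformly on $[\kappa,1]$, as already shown while proving (\ref{Rem1}), and the denominator equals $\widetilde{W}_T(s) = (T/\trunc{Ts})^2\int_0^s U_T^2(r)\,dr$, a continuous functional of $U_T$ that is bounded away from $0$ in the limit because $\int_0^s B^2(r)\,dr > 0$ for $s \ge \kappa$ a.s. Substituting the deterministic limits from the previous paragraph renders the numerator of $C_T$ a deterministic function up to uniform $o_P(1)$, so the only surviving randomness enters through $\widetilde{W}_T$; hence $C_T$ is a continuous functional of $U_T$ up to uniform $o_P(1)$, with weak limit
$$
  \frac{\frac{s}{2}(\vartheta^{-2}-1)\int_0^s K(\zeta(s-r))\,dr}{\int_0^s B^2(r)\,dr} = \calD_1(s) - \calD_\vartheta(s),
$$
using $\sigma^2/\eta^2 = \vartheta^{-2}$. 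Because numerator and denominator of both $D_T$ and $C_T$ are functionals of the single process $U_T$, their joint convergence is automatic, and the CMT gives $E_T = D_T + C_T \Rightarrow \calD_\vartheta + (\calD_1 - \calD_\vartheta) = \calD_1$ in $(D[\kappa,1],d)$, which is free of $\vartheta$.

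Finally, the stopping-time statement $Z_T/T \stackrel{d}{\to}\inf\{\kappa \le t \le 1 : \calD_1(t) < c\}$ follows exactly as in Corollary~\ref{CorST}: rewrite $\{Z_T > x\}$ as a sup-functional of $-E_T$, invoke the Skorokhod/Dudley/Wichura representation together with the result of Lifshits (1982), and use that $\calD_1 \in C[0,1]$ a.s. The main obstacle is the first step, namely transferring the tightness and uniform consistency of the Newey--West quantities from Theorem~\ref{ThEstCL} and verifying that replacing $\widehat{\sigma}^2_{\trunc{Ts}}$ and $\widehat{\eta}^2_{\trunc{Ts}}$ by their deterministic limits does not disturb the coupling with $D_T$; once $C_T$ is written as a continuous functional of $U_T$, the rest is a routine CMT application.
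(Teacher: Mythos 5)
Your proposal is correct and takes essentially the same route as the paper's own proof: the paper likewise invokes the weak convergence $\widehat{\eta}^2_{\trunc{T\cdot}} \Rightarrow \eta^2$ and $\widehat{\sigma}^2_{\trunc{T\cdot}} \Rightarrow \sigma^2$ established in the proof of Theorem~\ref{ThEstCL} (which is where (E2) and $m = o(T^{1/2})$ enter), deduces joint weak convergence of $\bigl( D_T(s), \trunc{Ts}^{-2}\sum_{t=1}^{\trunc{Ts}} Y_{t-1}^2, \widehat{\eta}^2_{\trunc{Ts}}, \widehat{\sigma}^2_{\trunc{Ts}} \bigr)$ (automatic here since the last two limits are constants), and concludes by the continuous mapping theorem together with the identity $\calD_\vartheta(s) + \frac{\sigma^2-\eta^2}{2\eta^2}\, \frac{s^{-1}\int_0^s K(\zeta(s-r))\,dr}{s^{-2}\int_0^s B^2(r)\,dr} = \calD_1(s)$, which is exactly your computed correction-term limit $\calD_1 - \calD_\vartheta$. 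The only differences are cosmetic: the paper leaves the kernel-sum Riemann approximation and the stopping-time step (the argument of Corollary~\ref{CorST}, via the Skorokhod representation and Lifshits' result) implicit, both of which you spell out correctly.
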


\begin{proof} As shown above,
$$
 \widehat{\eta}^2_{\trunc{T \cdot}} \Rightarrow \eta^2
 \quad \text{and} \quad
 \widehat{\sigma}^2_{\trunc{T \cdot}} \Rightarrow \sigma^2,
$$
as $ T \to \infty $, which implies that
$$
  \left( D_T(s), \trunc{Ts}^{-2} \sum_{t=1}^{\trunc{Ts}} Y_{t-1}^2,
\widehat{\eta}^2_{\trunc{Ts}}, \widehat{\sigma}^2_{\trunc{Ts}} \right)
 \Rightarrow ( \calD_\vartheta(s), \eta^2/s^2 \int_0^s B^2(r) \, dr, \eta^2, \sigma^2 ),
$$
if $ T \to \infty $, yielding
\begin{eqnarray*}
 && D_T(s)
    + \frac{
        \frac{ \widehat{\sigma}^2_{\trunc{Ts}} - \widehat{\eta}^2_{\trunc{Ts}} }{ 2 }
               \frac{1}{\trunc{Ts}} \sum_{t=1}^{\trunc{Ts}} K( (\trunc{Ts}-t)/h )
      }{
        \trunc{Ts}^{-2} \sum_{t=1}^{\trunc{Ts}} Y_{t-1}^2
      } \\
 && \qquad
  \Rightarrow \calD_\vartheta(s) + \frac{ \sigma^2 - \eta^2 }{ 2 \eta^2 }
  \frac{ s^{-1} \int_0^s K(\zeta(s-r)) \, dr }{ s^{-2} \int_0^s B^2(r) \, dr }
  = \calD_1(s).
\end{eqnarray*}
\end{proof}

\subsection{Weighted Dickey-Fuller $t$-processes}

Let us now derive (functional) central limit theorems for the weighted Dickey-Fuller
$t$-processes and the associated detection rules. We start with the process
$ \widetilde{D}_T $ under the random walk null hypothesis.

\begin{theorem}
\label{ThTType} Assume (E1), and (K1)-(K3). Then
$$
  \widetilde{D}_T \Rightarrow \widetilde{\calD}_\vartheta, \qquad \mbox{in $(D[\kappa,1],d)$}
$$
as $ T \to \infty $, where
$$ \widetilde{\calD}_\vartheta(s) =
   \frac{ \frac{1}{2} \left\{
           \vartheta K(0) B(s)^2 + \vartheta \zeta \int_0^s B(r)^2 K'(\zeta(s-r)) \, dr
            - \vartheta^{-1} \int_0^s K( \zeta(s-r) ) \, dr
         \right\} }
        { \left\{ \int_0^s B(r)^2 \, dr \right\}^{1/2} }
$$
for $ s \in (0,1] $ and $ \widetilde{\calD}_\vartheta(0) = 0 $. Here $ \vartheta = \eta/\sigma $.
$ \widetilde{\calD}_\vartheta $ is continuous a.s.
\end{theorem}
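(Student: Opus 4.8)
The plan is to reduce the statement to Theorem~\ref{ThBasic} by treating the studentizing factor $\trunc{Ts}\,\widehat{\xi}_{\trunc{Ts}}$ as an asymptotically deterministic multiplier. Writing $\widetilde{W}_T(s)=\trunc{Ts}^{-2}\sum_{t=1}^{\trunc{Ts}}Y_{t-1}^2$ as in the proof of Theorem~\ref{ThBasic}, one has $\trunc{Ts}\,\widehat{\xi}_{\trunc{Ts}}=\{s_{\trunc{Ts}}^2/\widetilde{W}_T(s)\}^{1/2}$ and hence
$$
  \widetilde{D}_T(s)=D_T(s)\,\bigl\{\widetilde{W}_T(s)/s_{\trunc{Ts}}^2\bigr\}^{1/2}.
$$
The proof of Theorem~\ref{ThBasic} already gives $D_T\Rightarrow\calD_\vartheta$ jointly with $\widetilde{W}_T\Rightarrow(\eta^2/s^2)\int_0^sB^2(r)\,dr$, both being continuous functionals of the single process $U_T(r)=T^{-1/2}Y_{\trunc{Tr}}$. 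Thus it suffices to show that the studentizer converges uniformly to the constant $\sigma^2$ and then to apply the continuous mapping theorem.

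The main obstacle is the uniform law of large numbers $\sup_{s\in[\kappa,1]}|s_{\trunc{Ts}}^2-\sigma^2|\stackrel{P}{\to}0$. Under $H_0$ we have $\Delta Y_t=\epsilon_t$ and $Y_t-\widehat{\rho}_{\trunc{Ts}}Y_{t-1}=\epsilon_t-(\widehat{\rho}_{\trunc{Ts}}-1)Y_{t-1}$, so expanding the square yields
$$
  s_{\trunc{Ts}}^2=\frac{1}{\trunc{Ts}-1}\sum_{t=1}^{\trunc{Ts}}\epsilon_t^2
  -2(\widehat{\rho}_{\trunc{Ts}}-1)\frac{1}{\trunc{Ts}-1}\sum_{t=1}^{\trunc{Ts}}\epsilon_tY_{t-1}
  +(\widehat{\rho}_{\trunc{Ts}}-1)^2\frac{1}{\trunc{Ts}-1}\sum_{t=1}^{\trunc{Ts}}Y_{t-1}^2 .
$$
For the first term I would invoke (\ref{FCLT2}): since $\sup_{s\in[\kappa,1]}|Z_T(s)|=O_P(1)$, the centred sum $\sum(\epsilon_t^2-\sigma^2)$ is $o_P(T)$ uniformly, so $(\trunc{Ts}-1)^{-1}\sum\epsilon_t^2\to\sigma^2$ uniformly on $[\kappa,1]$. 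For the two remaining terms the key estimate is $\sup_{s\in[\kappa,1]}|\trunc{Ts}(\widehat{\rho}_{\trunc{Ts}}-1)|=O_P(1)$, which follows because $\trunc{Ts}(\widehat{\rho}_{\trunc{Ts}}-1)=\{\trunc{Ts}^{-1}\sum\epsilon_tY_{t-1}\}/\widetilde{W}_T(s)$ converges weakly, in $D[\kappa,1]$, to an a.s.\ continuous process (its numerator equals $(1/2)\{\trunc{Ts}^{-1}Y_{\trunc{Ts}}^2-\trunc{Ts}^{-1}\sum\epsilon_t^2\}$, a continuous functional of $U_T$, and $\widetilde{W}_T$ stays bounded away from $0$). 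Since $\trunc{Ts}^{-1}\sum\epsilon_tY_{t-1}$ and $\widetilde{W}_T$ likewise have a.s.\ continuous weak limits, their suprema are $O_P(1)$; writing $\widehat{\rho}_{\trunc{Ts}}-1=\trunc{Ts}(\widehat{\rho}_{\trunc{Ts}}-1)/\trunc{Ts}$ and using $\trunc{Ts}\ge\trunc{T\kappa}$ then shows that the cross term and the quadratic term are each $O_P(1/T)$ uniformly in $s$, which establishes the claim.

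With the studentizer handled, $s_{\trunc{T\cdot}}^2\stackrel{P}{\to}\sigma^2$ towards a deterministic limit, so the triple $(D_T,\widetilde{W}_T,s_{\trunc{T\cdot}}^2)$ converges jointly in $(D[\kappa,1])^3$ to $(\calD_\vartheta,(\eta^2/s^2)\int_0^sB^2,\sigma^2)$ by a Slutsky argument. The map $(x,w,v)\mapsto x\{w/v\}^{1/2}$ is continuous at every triple whose last two coordinates are bounded away from $0$, and since $\int_0^sB^2(r)\,dr>0$ for all $s\in[\kappa,1]$ a.s.\ and $\sigma^2>0$, the limit lies in the continuity set, so the continuous mapping theorem applies and gives
$$
  \widetilde{D}_T(s)\Rightarrow\vartheta\,\calD_\vartheta(s)\,\frac{\{\int_0^sB^2(r)\,dr\}^{1/2}}{s},
  \qquad \vartheta=\eta/\sigma .
$$
Substituting the explicit form (\ref{DefD}) of $\calD_\vartheta$, the prefactor $(s/2)\big/\int_0^sB^2$ combines with $\vartheta\{\int_0^sB^2\}^{1/2}/s$ to produce $\vartheta\big/(2\{\int_0^sB^2\}^{1/2})$, and distributing $\vartheta$ across the braces (using $\vartheta\cdot\vartheta^{-2}=\vartheta^{-1}$) yields exactly the stated $\widetilde{\calD}_\vartheta$. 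Finally, a.s.\ continuity of $\widetilde{\calD}_\vartheta$ on $[\kappa,1]$ is immediate from the a.s.\ continuity of $\calD_\vartheta$ (Theorem~\ref{ThBasic}) together with the continuity and strict positivity of $s\mapsto\{\int_0^sB^2(r)\,dr\}^{1/2}/s$ there.
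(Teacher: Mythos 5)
Your proposal is correct and takes essentially the same route as the paper's proof: the same expansion of $ s^2_{\trunc{Ts}} $ around $ (\trunc{Ts}-1)^{-1}\sum_{t=1}^{\trunc{Ts}}\epsilon_t^2 $ via the key bounds $ \sup_{s}\trunc{Ts}\,|\widehat{\rho}_{\trunc{Ts}}-1| = O_P(1) $ and $ \sup_s |\trunc{Ts}^{-1}\sum_t \epsilon_t Y_{t-1}| = O_P(1) $, followed by joint weak convergence of $ (D_T, \widetilde{W}_T, s^2_{\trunc{T\cdot}}) $ with a Slutsky argument and the continuous mapping theorem, leading to the identical algebraic simplification $ \widetilde{D}_T \Rightarrow \vartheta\, \calD_\vartheta(s)\{\int_0^s B^2(r)\,dr\}^{1/2}/s = \widetilde{\calD}_\vartheta(s) $. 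The only minor deviations are that you obtain the uniform convergence $ s^2_{\trunc{T\cdot}} \to \sigma^2 $ directly from the invariance principle (\ref{FCLT2}) (which even gives a rate) where the paper cites the time-series law of large numbers of Brockwell and Davis, and that you spell out a justification for the $ O_P(1) $ bound on $ \trunc{Ts}(\widehat{\rho}_{\trunc{Ts}}-1) $ that the paper merely asserts.
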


\begin{remark} Note that again the limit depends on the nuisance parameter $ \vartheta $
and is distribution-free if and only if $ \vartheta = 1 $.
\end{remark}

\begin{proof} By definition
  $$
    \widetilde{D}_T(s) = \frac{ D_T(s) }{ \trunc{Ts} \widehat{\xi}_{ \trunc{Ts} } }
  $$
  where
  $$
    \trunc{ Ts } \widehat{\xi}_{ \trunc{Ts} }
    =
    \sqrt{ \frac{ S^2_{ \trunc{Ts} } }{ \trunc{Ts}^{-2} \sum_{t=1}^{ \trunc{Ts} } Y_{t-1}^2 } }
  $$
  with
  $$
    S^2_{ \trunc{Ts} } =
      \frac{1}{ \trunc{Ts} - 1} \sum_{t=1}^{ \trunc{Ts} }
        \widehat{\epsilon}_t( \trunc{Ts} )^2, \qquad
    \widehat{\epsilon}_t( \trunc{Ts} ) = Y_t - \widehat{\rho}_{\trunc{Ts}} Y_{t-1},
  $$
  for $ s \in (0,1] $.
  Note that for $ t = 1, \dots, \trunc{Ts} $
  $$
   \widehat{\epsilon}_t( \trunc{Ts} ) - \epsilon_t = -( \widehat{\rho}_{\trunc{Ts}} - 1) Y_{t-1}.
  $$
  Hence, we obtain
  \begin{eqnarray*}
    S_{\trunc{Ts}}^2
      &=& \frac{1}{\trunc{Ts}-1} \sum_{t=1}^{\trunc{Ts}}
      (\epsilon_t + \{ \widehat{\epsilon}_t( \trunc{Ts} ) - \epsilon_t \})^2 \\
      &=& \frac{1}{\trunc{Ts}-1} \sum_{t=1}^{\trunc{Ts}} \epsilon_t^2
      - (\widehat{\rho}_{\trunc{Ts}} - 1)
        \frac{2}{\trunc{Ts}-1} \sum_{t=1}^{\trunc{Ts}} \epsilon_t Y_{t-1}
      + (\widehat{\rho}_{\trunc{Ts}} - 1)^2 \frac{1}{\trunc{Ts}-1} \sum_{t=1}^{\trunc{Ts}} Y_{t-1}^2.
  \end{eqnarray*}
  From the proof of Theorem~\ref{ThBasic} we know that
  $$
    \sup_{s \in (0,1]} \trunc{Ts}^{-2} \sum_{t=1}^{\trunc{Ts}} Y_{t-1}^2
    = \sup_{s \in (0,1]} \left( \frac{ T }{ \trunc{Ts} } \right)^2
        \int_0^s (T^{-1/2} Y_{\trunc{Tr}} )^2 \, dr = O_P(1)
  $$
  and
  $$
   \sup_{s \in (0,1]} \left| \trunc{Ts}^{-1} \sum_{t=1}^{\trunc{Ts}} \epsilon_t Y_{t-1} \right|
   \le \sup_{s \in (0,1]} \left| \trunc{Ts}^{-1/2} \sum_{t=1}^{\trunc{Ts}} \epsilon_t \right|
   \sup_{s \in (0,1]} | \trunc{Ts}^{-1/2} Y_{\trunc{Ts}} | = O_P(1).
  $$
  Combining these facts with
    $ \sup_{s \in (0,1]} \trunc{Ts} | \widehat{\rho}_{\trunc{Ts}} - 1 | = O_P(1) $,
  we obtain
  $$
    S_{\trunc{T s}}^2 = (\trunc{Ts}-1)^{-1} \sum_{t=1}^{\trunc{Ts}} \epsilon_t^2 + o_P(1),
  $$
  where the $ o_P(1) $ term is uniform in $ s \in (0,1] $.
  Because (E1) implies that
  $$
    \gamma_2(k) = \Cov( \epsilon_1^2, \epsilon_{1+k}^2 ) = o(1), \qquad |k| \to \infty,
  $$
  we may apply the law of large numbers for time series
  (Brockwell and Davis (1991), Th. 7.1.1) and obtain, since stochastic
  convergence to a constant yields stochastic convergence in the Skorokhod topology,
  \begin{equation}
  \label{ConvConst}
    d( S_{\trunc{T \circ }}^2, \sigma^2 ) \stackrel{P}{\to} 0,
  \end{equation}
  as $ T \to \infty $.
  We shall now show joint weak convergence of
  $ ( D_T(s), S^2_{\trunc{Ts}}, \trunc{Ts}^{-2} \sum_{t=1}^{\trunc{Ts}} Y_{t-1}^2 ) $,
  $ s \in (0,1] $.
  Let $ (\lambda_1,\lambda_2,\lambda_3) \in \R^3 - \{ \vecnull \}  $ and consider
  $$
    \lambda_1 D_T(s) + \lambda_2 S_{\trunc{T s}}^2
    + \lambda_3 \trunc{Ts}^{-2} \sum_{t=1}^{\trunc{T s}} Y_{t-1}^2,
    \qquad s \in [\kappa,1].
  $$
  The proof of Theorem~\ref{ThBasic} implies that
  $$
    \lambda_1 D_T(s) + \lambda_3 \trunc{Ts}^{-2} \sum_{t=1}^{\trunc{Ts}} Y_{t-1}^2
    \Rightarrow
    \lambda_1 \calD_\vartheta(s) + \lambda_3 \frac{\eta^2}{s^2} \int_0^s B(r)^2 \, dr,
  $$
  as $ T \to \infty $. Due to (\ref{ConvConst}), we obtain
  $$
    \lambda_1 D_T(s) + \lambda_2 S_{\trunc{T s}}^2
    + \lambda_3 \trunc{Ts}^{-2} \sum_{t=1}^{\trunc{T s}} Y_{t-1}^2
    \Rightarrow \lambda_1 \calD_\vartheta(s) + \lambda_2 \sigma^2
     + \lambda_3 \frac{\eta^2}{s^2} \int_0^s B(r)^2 \, dr,
  $$
  as $ T \to \infty $. Therefore, the CMT implies that
  $$
    \trunc{Ts} \widehat{\xi}_{\trunc{Ts}}
      \Rightarrow  \sqrt{  \frac{\sigma^2}{ \frac{ \eta^2 }{ s^2 } \int_0^s B^2(r) \, dr } }
      = \frac{ s }{ \vartheta \sqrt{ \int_0^s B^2(r) \,dr } }
  $$
  and
  $$
    \widetilde{D}_T(s) = \frac{ D_T(s) }{ \trunc{Ts} \widehat{\xi}_{\trunc{Ts}} }
     \Rightarrow  \frac{ \calD_\vartheta(s) \vartheta \sqrt{ \int_0^s B^2(r) \, dr } }{ s }
     = \widetilde{\calD}_\vartheta(s),
  $$
  as $ T \to \infty $,  yielding the assertion.
\end{proof}

We are now in the position to establish consistency of the $t$-type  detection rule
$$ \widehat{ \widetilde{S} }_T = \inf \{ k \le t \le T : \widetilde{D}_T( t/T) <
c( \widehat{\vartheta}_t ) \}, $$ which uses estimated control
limits. Notice that Theorem~\ref{ThTType} implies that $ c(
\vartheta ) $ is given by $ P_0( \inf_{s \in [\kappa,1]}
\widetilde{\calD}_\vartheta(s) < c(\vartheta ) ) = \alpha $.

\begin{theorem} Assume (E1),(E2), (K1)-(K3), and additionally
that the lag truncation parameter of the Newey-West estimator satisfies
$$
  m = o( T^{1/2} ), \qquad T \to \infty.
$$
Then the $t$-type weighted Dickey-Fuller control chart with estimated control limits,
$ \widehat{ \widetilde{S} }_T $, is consistent, i.e.,
$$
  P( \widehat{ \widetilde{S} }_T \le T ) \to \alpha,
$$
as $ T \to \infty $.
\end{theorem}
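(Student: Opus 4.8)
The plan is to reproduce the proof of Theorem~\ref{ThEstCL} almost line for line, with $D_T$ replaced by $\widetilde{D}_T$, the limit $\calD_\vartheta$ replaced by $\widetilde{\calD}_\vartheta$, and the appeal to Theorem~\ref{ThBasic} replaced by Theorem~\ref{ThTType}. First I would express the event through the stopping rule. A signal occurs if and only if $\inf_{s \in [\kappa,1]}\bigl(\widetilde{D}_T(s) - c(\widehat{\vartheta}_{\trunc{Ts}})\bigr) < 0$, so that
$$
  P(\widehat{\widetilde{S}}_T \le T)
  = P\left( \inf_{s \in [\kappa,1]}
      \bigl(\widetilde{D}_T(s) - c(\widehat{\vartheta}_{\trunc{Ts}})\bigr) < 0 \right),
$$
and the target is to show this converges to $P(\inf_{s \in [\kappa,1]} \widetilde{\calD}_\vartheta(s) < c(\vartheta)) = \alpha$.

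The first substantive step is continuity of $\vartheta \mapsto c(\vartheta)$. I would write $\widetilde{\calD}_\vartheta(s) = \vartheta\,\widetilde{\calE}(s) - \vartheta^{-1}\,\widetilde{\calF}(s)$, where
$$
  \widetilde{\calF}(s) = \frac{\frac{1}{2} \int_0^s K(\zeta(s-r))\,dr}
       { \{ \int_0^s B(r)^2\,dr \}^{1/2} }
$$
and $\widetilde{\calE}$ collects the two remaining terms; both are a.s. continuous and free of $\vartheta$. For a.e. fixed $\omega$ the map $\vartheta \mapsto \widetilde{\calD}_\vartheta(\omega)$ is continuous in $C[\kappa,1]$ for $\vartheta > 0$, so $\vartheta_n \to \vartheta^*$ forces $\inf_s \widetilde{\calD}_{\vartheta_n}(s) \to \inf_s \widetilde{\calD}_{\vartheta^*}(s)$ a.s. by continuity of the infimum functional, hence in distribution. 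Provided $\inf_{s \in [\kappa,1]} \widetilde{\calD}_{\vartheta^*}(s)$ has a continuous distribution function, pointwise convergence of the associated d.f.s follows, whence $c(\vartheta_n) = F_n^{-1}(\alpha) \to F^{-1}(\alpha) = c(\vartheta^*)$, establishing $c \in C((0,\infty))$.

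Next I would invoke the weak convergence $\widehat{\vartheta}_{\trunc{Ts}} \Rightarrow \vartheta$ in $D[\kappa,1]$ already proved in Theorem~\ref{ThEstCL}: it rests only on (E1), (E2), (K1)--(K3), the rate $m = o(T^{1/2})$, and the identity $\Delta Y_t = \epsilon_t$ under $H_0$, all of which hold here, so the tightness argument for the Newey-West process transfers verbatim. Combining this with the functional CLT of Theorem~\ref{ThTType} and the joint-convergence reasoning employed there, I would obtain
$$
  (\widetilde{D}_T(\cdot), \widehat{\vartheta}_{\trunc{T\cdot}})
    \Rightarrow (\widetilde{\calD}_\vartheta(\cdot), \vartheta)
$$
in $(D[\kappa,1])^2$. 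Finally, the functional $\psi(x,y) = \inf_{s \in [\kappa,1]}(x(s) - c(y(s)))$ is continuous at every $(x,y) \in (C[\kappa,1])^2$ because $c$ is continuous; since $\widetilde{\calD}_\vartheta \in C[\kappa,1]$ a.s. and the limit $\vartheta$ is constant, the CMT yields convergence to $P(\inf_s \widetilde{\calD}_\vartheta(s) < c(\vartheta)) = \alpha$, provided the law of $\inf_s \widetilde{\calD}_\vartheta(s)$ has no atom at $c(\vartheta)$.

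The main obstacle will be the anti-concentration input: passing from a.s. convergence of the infima to convergence of the quantiles $c(\vartheta_n)$, and guaranteeing $P(\inf_s \widetilde{\calD}_\vartheta(s) = c(\vartheta)) = 0$, both require that $\inf_{s \in [\kappa,1]} \widetilde{\calD}_\vartheta(s)$ admit a continuous distribution function. In the $D_T$ case this was supplied by a Lifshits (1982) type argument, as used for Corollary~\ref{CorST}; for the $t$-process I would establish the analogous absolute continuity of the law, which is delicate because of the random denominator $\{\int_0^s B(r)^2\,dr\}^{1/2}$ entering $\widetilde{\calD}_\vartheta$. Everything else is a faithful transcription of the proof of Theorem~\ref{ThEstCL}.
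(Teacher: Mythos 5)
Your proposal is correct and coincides with the paper's intended argument: the paper's own proof is a one-line remark that the result follows ``along the lines of the proof of Theorem~\ref{ThEstCL}, since the process $\widetilde{\calD}_\vartheta$ is continuous w.p.~$1$ and is a continuous function of $\vartheta$,'' which is exactly the transcription you carry out, including the correct decomposition $\widetilde{\calD}_\vartheta = \vartheta\,\widetilde{\calE} - \vartheta^{-1}\widetilde{\calF}$ and the reuse of the weak convergence $\widehat{\vartheta}_{\trunc{T\cdot}} \Rightarrow \vartheta$. Your closing remark about the anti-concentration input (continuity of the law of $\inf_{s\in[\kappa,1]}\widetilde{\calD}_\vartheta(s)$, via a Lifshits-type argument) is in fact more explicit than the paper, which tacitly assumes it just as in Theorem~\ref{ThEstCL} and Corollary~\ref{CorST}.
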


\begin{proof} The result is shown along the lines of the proof of Theorem~\ref{ThEstCL},
  since the process $ \widetilde{\calD}_\vartheta $ is continuous w.p. $1$, and
  is a continuous function of $ \vartheta $.
\end{proof}

Finally, for the transformed process $ \widetilde{E}_T $ and the
associated control chart $ \widetilde{Z}_T $ we have the following
result.

\begin{theorem}
Assume (E1),(E2), (K1)-(K3), and
$$
  m = o( T^{1/2} ), \qquad T \to \infty.
$$
Then the transformed $t$-type weighted DF process $ \widetilde{E}_T $, defined
in (\ref{DefTransT}), converges weakly,
$$
  \widetilde{E}_T \Rightarrow \widetilde{\calD}_1, \qquad \text{in $(D[0,1],d)$},
$$
as $ T \to \infty $, and for the transformed $t$-type weighted DF control chart we have
$$
  \widetilde{Z}_T / T \stackrel{d}{\to} \inf \{ \kappa < s < 1 : \calD_1(s) < c \}.
$$
Particularly, the asymptotic distribution is invariant with respect to $ \vartheta $.
\end{theorem}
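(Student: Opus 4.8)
The plan is to realize $\widetilde{E}_T$ as a continuous functional of building blocks whose joint weak limits have already been pinned down in the proofs of Theorems~\ref{ThBasic}, \ref{ThTType} and \ref{ThEstCL}, to pass to the limit by the continuous mapping theorem (CMT), and then to transfer the process convergence to the stopping time exactly as in Corollary~\ref{CorST}. Throughout I work on $[\kappa,1]$, where all denominators stay bounded away from zero.

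First I would collect the component convergences. From the proof of Theorem~\ref{ThTType} we have $\widetilde{D}_T \Rightarrow \widetilde{\calD}_\vartheta$ together with $d(S^2_{\trunc{T\circ}},\sigma^2)\stackrel{P}{\to}0$, hence $S_{\trunc{Ts}}\Rightarrow\sigma$ by the CMT applied to the square root. From the proof of Theorem~\ref{ThEstCL} we have $\widehat{\eta}^2_{\trunc{T\circ}}\Rightarrow\eta^2$ and $\widehat{\sigma}^2_{\trunc{T\circ}}\Rightarrow\sigma^2$, each converging to a constant. From the proof of Theorem~\ref{ThBasic} we have the uniform approximation $\trunc{Ts}^{-1}\sum_{t=1}^{\trunc{Ts}}K((\trunc{Ts}-t)/h)\to s^{-1}\int_0^s K(\zeta(s-r))\,dr$ and $\trunc{Ts}^{-2}\sum_{t=1}^{\trunc{Ts}}Y_{t-1}^2\Rightarrow \eta^2 s^{-2}\int_0^s B^2(r)\,dr$. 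Since $S_{\trunc{Ts}}$, $\widehat{\eta}_{\trunc{Ts}}$, $\widehat{\sigma}^2_{\trunc{Ts}}$ and the kernel average all converge to non-random limits, a Slutsky-type argument in $D[\kappa,1]$ upgrades these to joint weak convergence of the full vector of ingredients of (\ref{DefTransT}) together with $\widetilde{D}_T$.

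Next I would apply the CMT to the map assembling $\widetilde{E}_T(s)$ from these pieces. The only point to verify is a.s.\ continuity of this map at the limit: the denominator $\widehat{\eta}_{\trunc{Ts}}\sqrt{\trunc{Ts}^{-2}\sum_{t=1}^{\trunc{Ts}}Y_{t-1}^2}$ converges to $(\eta^2/s)\sqrt{\int_0^s B^2(r)\,dr}$, which is bounded away from $0$ on $[\kappa,1]$ a.s.\ because $\int_0^s B^2(r)\,dr>0$ for $s\ge\kappa>0$ and $\eta>0$. Using $S_{\trunc{Ts}}/\widehat{\eta}_{\trunc{Ts}}\to\sigma/\eta=\vartheta^{-1}$, the first term of (\ref{DefTransT}) tends to $\vartheta^{-1}\widetilde{\calD}_\vartheta(s)$, while the correction term tends to $(1-\vartheta^{-2})\int_0^s K(\zeta(s-r))\,dr\,/\,[2\sqrt{\int_0^s B^2(r)\,dr}]$. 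Combining the two over the common denominator $\sqrt{\int_0^s B^2(r)\,dr}$, the $\vartheta^{-2}$ contributions cancel exactly, leaving numerator $\tfrac12\{K(0)B(s)^2+\zeta\int_0^s B^2(r)K'(\zeta(s-r))\,dr-\int_0^s K(\zeta(s-r))\,dr\}$, which is precisely $\widetilde{\calD}_\vartheta$ evaluated at $\vartheta=1$. This gives $\widetilde{E}_T\Rightarrow\widetilde{\calD}_1$, a limit free of $\vartheta$.

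Finally, for the stopping rule I would argue as in Corollary~\ref{CorST}: since $\widetilde{Z}_T>x\Leftrightarrow\sup_{s\in[\kappa,x]}(-\widetilde{E}_T(s))\le-c(1)$, it suffices to control the law of the supremum functional. Realizing the weak convergence as a.s.\ convergence by the Skorokhod/Dudley/Wichura representation and invoking Lifshits (1982) — applicable because $\widetilde{\calD}_1\in C[\kappa,1]$ a.s.\ and $c(1)$, determined via Theorem~\ref{ThTType}, is a continuity point of the relevant distribution — one obtains $\widetilde{Z}_T/T\stackrel{d}{\to}\inf\{s\in[\kappa,1]:\widetilde{\calD}_1(s)<c(1)\}$, whence invariance in $\vartheta$ is immediate. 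I expect the main obstacle to be purely computational: confirming the cancellation of the $\vartheta^{-2}$ terms and the a.s.\ continuity of the assembling map (equivalently, that the denominators stay away from $0$ on $[\kappa,1]$). Everything deeper — tightness, the invariance principles, and the first-passage continuity — is already furnished by the earlier results.
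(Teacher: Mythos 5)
Your proposal is correct and follows essentially the same route as the paper's own (much terser) proof: the paper likewise observes that the first term $\frac{S_{\trunc{Ts}}}{\widehat{\eta}_{\trunc{Ts}}}\widetilde{D}_T(s)$ converges weakly to $\vartheta^{-1}\widetilde{\calD}_\vartheta$, of the form $[A(s)-\vartheta^{-2}\tfrac12\int_0^s K(\zeta(s-r))\,dr]/[\int_0^s B^2(r)\,dr]^{1/2}$, so that the correction term cancels the $\vartheta^{-2}$ contribution exactly as in the construction of $E_T$. You simply make explicit what the paper leaves implicit --- the joint convergence of the ingredients (already furnished by the proofs of Theorems~\ref{ThBasic}, \ref{ThTType}, and \ref{ThEstCL}), the continuity of the assembling map on $[\kappa,1]$, and the transfer to $\widetilde{Z}_T$ via the Corollary~\ref{CorST} argument --- all of which is sound.
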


\begin{proof} Note that the first term of $ \widetilde{E}_T $ converges weakly to
$ \vartheta^{-1} \widetilde{D}_\vartheta $, which has the form
$ [A(s) - \vartheta^{-2} \int_0^s K(\zeta(s-r)) \, dr]/ [ \int_0^s B^2(r) \, dr ]^{1/2} $.
Hence, the construction of the correction term is as for $ E_T $.
\end{proof}

\section{Asymptotics under local-to-unity alternatives}
\label{AsH1}

In econometric applications, the stationary alternatives
of interest are often of the form $ 0 < \rho < 1 $ with $ 1-\rho $ small.
To mimic this situation asymptotically, we consider a local-to-unity model
where the AR parameter depends on $ T $ and tends to $ 1 $,
as the time horizon $ T $ increases.

The functional central limit theorem given below shows that the
asymptotic distribution under local-to-unity alternatives is also
affected by the nuisance parameter $ \vartheta $. However, the term
which depends on the parameter parameterising the local alternative
does not depend on $ \vartheta $ (or $ \eta $). Therefore, if one
takes the nuisance parameter $ \vartheta $ into account when
designing a detection procedure, we obtain local asymptotic power.

Let us assume that we are given an array
$ \{ Y_{T,t} \} = \{ Y_{T,t} : 1 \le t \le T, T \in \N \} $ of observations
satisfying
\begin{equation}
\label{LocalToUnity}
  Y_{T,0} = 0, \quad Y_{T,t} = \rho_T Y_{T,t-1} + \epsilon_t, \qquad t = 1, \dots, T, \ T \ge 1,
\end{equation}
where the sequence of AR parameters $ \{ \rho_T \} $ is given by
$$
  \rho_T = 1 + a/T, \qquad T \ge 1,
$$
for some constant $ a $. $ \{ \epsilon_t \} $ is a mean-zero
stationary I(0) process satisfying (E1). For brevity of notation $
D_T $ denotes in this section the process (\ref{DefDT}) with $ Y_t $
replaced by $ Y_{T,t} $.

The limit distribution will be driven by an Ornstein-Uhlenbeck process.
Recall that the Ornstein-Uhlenbeck process $ \calZ_a $ with parameter $a$ is
defined by
\begin{equation}
\label{DefOU}
  \calZ_a(s) = \int_0^s e^{a(s-r)} \, d B(r), \qquad s \in [0,1],
\end{equation}
where $B$ denotes Brownian motion.

\begin{theorem}
\label{ThLocalUnity} Assume (E1),  and (K1)-(K3). Under the
local-to-unity model (\ref{LocalToUnity}) we have for the weighted
Dickey-Fuller process
$$
  D_T(s) \Rightarrow \calD_\vartheta^a(s),
$$
as $ T \to \infty $, where the a.s. $ C[0,1] $-valued process $ \calD_\vartheta^a $ is given by
$$
      \frac{ K(0) \calZ_a^2(s) + \zeta \int_0^s \calZ_a^2(r) K'( \zeta(s-r) ) \, dr - 2a \int_0^s \calZ^2_a(r) K( \zeta(s-r) ) \, dr
           - \frac{1}{\vartheta^2} \int_0^s K( \zeta( s-r ) ) \, dr }
    { (2/s) \int_0^s \calZ_a^2(r) \, dr }
$$
for $ s \in (0,1] $, and $ \calD_\vartheta^a(0) = 0$. Here $ \calZ_a
$ denotes the Ornstein-Uhlenbeck process defined in (\ref{DefOU}).
Further,
$$
  S_T/T \stackrel{d}{\to} \inf \{ s \in [\kappa,1] : \calD_\vartheta^a(s) < c \}, \qquad
  \mbox{as $ T \to \infty $}.
$$
\end{theorem}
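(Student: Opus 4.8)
The plan is to mirror the proof of Theorem~\ref{ThBasic} step for step, systematically replacing the Brownian motion $B$ by the Ornstein--Uhlenbeck process $\calZ_a$ of (\ref{DefOU}). The one genuinely new analytic ingredient is the invariance principle for the partial-sum process under the local drift. Solving the recursion (\ref{LocalToUnity}) gives $Y_{T,\trunc{Tr}} = \sum_{j=1}^{\trunc{Tr}} \rho_T^{\,\trunc{Tr}-j}\epsilon_j$, and since $\rho_T^{\,\trunc{Tr}-j} = (1+a/T)^{\trunc{Tr}-j} = e^{a(r-j/T)} + o(1)$ uniformly in $j,r$, the process $U_T(r) := T^{-1/2}Y_{T,\trunc{Tr}}$ is, up to a uniformly negligible error, the image of $T^{-1/2}\sum_{j\le\trunc{Tr}}\epsilon_j$ under the exponential-weighting map. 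That map is continuous on $D[0,1]$: integration by parts shows $\int_0^r e^{a(r-u)}\,dx(u) = x(r) + a\int_0^r e^{a(r-u)}x(u)\,du$, a continuous functional of $x$. Combined with (\ref{FCLT1}) and the continuous mapping theorem this yields $U_T \Rightarrow \eta\calZ_a$ in $(D[0,1],d)$, the substitute for the step $U_T \Rightarrow \eta B$ used in Theorem~\ref{ThBasic}.

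First I would use the exact identity $Y_{T,t-1}\Delta Y_{T,t} = \frac{1}{2}(Y_{T,t}^2 - Y_{T,t-1}^2 - (\Delta Y_{T,t})^2)$ to write $D_T(s) = (\widetilde{V}_T(s) - \widetilde{R}_T(s))/\widetilde{W}_T(s)$ with the three processes exactly as in the proof of Theorem~\ref{ThBasic}, except that now $\Delta Y_{T,t} = (a/T)Y_{T,t-1} + \epsilon_t$. The telescoping term $\widetilde{V}_T$ only sees $Y_{T,t}^2 - Y_{T,t-1}^2$, so the same integration-by-parts argument applies verbatim with $B$ replaced by $\calZ_a$; this is where the local drift surfaces, since $\calZ_a^2$ is no longer a pure telescoping martingale part when $a\ne 0$. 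By It\^o's formula $d\calZ_a^2(r) = (2a\calZ_a^2(r)+1)\,dr + 2\calZ_a(r)\,dB(r)$, and isolating this drift is what produces the $a$-dependent summand $\int_0^s\calZ_a^2(r)K(\zeta(s-r))\,dr$ in the numerator of $\calD_\vartheta^a$; pinning down its coefficient (and sign) against the martingale part is the delicate bookkeeping point. For $\widetilde{R}_T = (2\trunc{Ts})^{-1}\sum(\Delta Y_{T,t})^2 K$ I would expand the square: the $\epsilon_t^2$ part converges to $\frac{\sigma^2}{2s}\int_0^s K(\zeta(s-r))\,dr$ exactly as in (\ref{Rem1}), using (\ref{FCLT2}), (K3), and the uniformly bounded variation of the kernel-weight functions, while the cross term $(a/T)\trunc{Ts}^{-1}\sum Y_{T,t-1}\epsilon_t K$ and the quadratic term $(a/T)^2(2\trunc{Ts})^{-1}\sum Y_{T,t-1}^2 K$ each carry an extra factor $T^{-1}$ and are $o_P(1)$ uniformly on $[\kappa,1]$. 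Since $\widetilde{W}_T(s) = (T/\trunc{Ts})^2\int_0^s U_T^2 \Rightarrow (\eta^2/s^2)\int_0^s\calZ_a^2$, every linear combination $\lambda_1(\widetilde{V}_T - \widetilde{R}_T) + \lambda_2\widetilde{W}_T$ is, up to $o_P(1)$, a continuous functional of $U_T$; joint weak convergence of numerator and denominator follows, and the continuous mapping theorem delivers $D_T \Rightarrow \calD_\vartheta^a$, the factors $\eta^2$ cancelling between numerator and denominator to leave the stated $\vartheta^{-2}$.

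For the stopping-time statement I would argue exactly as in Corollary~\ref{CorST}: the equivalence $\{S_T > x\} = \{\sup_{s\in[\kappa,x]} -D_T(s) \le -c\}$ reduces the claim to convergence of $P(\sup_{[\kappa,x]} -D_T \le -c)$, which follows from the functional limit above together with the Skorokhod/Dudley/Wichura representation and the Lifshits (1982) result, applicable because $\calD_\vartheta^a \in C[0,1]$ a.s.\ and $c<0$; the a.s.\ continuity of $\calD_\vartheta^a$ itself comes from (K2), the continuity of $\calZ_a$, and positivity of $s\mapsto\int_0^s\calZ_a^2$ for $s>0$. I expect the main obstacle to be the invariance principle of the first paragraph, namely verifying that the exponential-weighting map is continuous and that the approximation $\rho_T^{\,\trunc{Tr}-j}\approx e^{a(r-j/T)}$ is uniform, together with making precise that the local drift enters the limit only through $\calZ_a$ and its It\^o drift, hence in a $\vartheta$-free way. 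Tracking the correlation-induced gap between $\sigma^2$ and $\eta^2$ carefully is essential here, since it is exactly what yields the $\vartheta^{-2}$ weighting while leaving the $a$-dependent term invariant, as required for the local asymptotic power claim.
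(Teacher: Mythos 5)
Your architecture (invariance principle $U_T \Rightarrow \eta\calZ_a$, decomposition of the numerator, joint convergence via the CMT, Lifshits-based stopping-time step) mirrors the paper's, and several pieces are sound: your exponential-weighting argument for $U_T(r)=T^{-1/2}Y_{T,\trunc{Tr}} \Rightarrow \eta\calZ_a(r)$ is essentially the paper's Stieltjes-integral argument $U_T(s)=\int_0^s e_T(r;s)\,dS_T(r)$ with $e_T(r;s)=(1+a/T)^{\trunc{Tr}-\trunc{Ts}}$ of uniformly bounded variation converging uniformly to $e^{a(s-r)}$; the treatment of the $\epsilon_t^2$ term and the final corollary-style argument are also as in the paper. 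But there is a genuine gap at the central bookkeeping step, and it is fatal to the proof as written. Your exact identity $Y_{T,t-1}\Delta Y_{T,t}=\tfrac12(Y_{T,t}^2-Y_{T,t-1}^2-(\Delta Y_{T,t})^2)$, carried through honestly, \emph{cannot} produce the summand $-2a\int_0^s\calZ_a^2(r)K(\zeta(s-r))\,dr$ of the stated limit: the only explicitly $a$-dependent pieces in your decomposition are the cross term $(a/T)\trunc{Ts}^{-1}\sum Y_{T,t-1}\epsilon_t K$ and the quadratic term $(a/T)^2(2\trunc{Ts})^{-1}\sum Y_{T,t-1}^2K$, and you correctly show both are $o_P(1)$, while the telescoping part $\widetilde{V}_T$ is handled by \emph{pathwise} integration by parts exactly as in Theorem~\ref{ThBasic}, so its limit is $\frac{\eta^2}{2s}\{K(0)\calZ_a^2(s)+\zeta\int_0^s\calZ_a^2(r)K'(\zeta(s-r))\,dr\}$ and nothing more. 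Your appeal to It\^o's formula for $\calZ_a^2$ is a non sequitur: no stochastic integral occurs anywhere in your argument (the convergence of $\widetilde{V}_T$ is a continuous-mapping consequence of $U_T\Rightarrow\eta\calZ_a$), so there is no martingale/drift split from which the $a$-term could be ``isolated.'' As written, your proof establishes a limit of the form $[K(0)\calZ_a^2(s)+\zeta\int_0^s\calZ_a^2K'(\zeta(s-r))\,dr-\vartheta^{-2}\int_0^s K(\zeta(s-r))\,dr]\big/[(2/s)\int_0^s\calZ_a^2(r)\,dr]$, which is not the process $\calD_\vartheta^a$ of the theorem; the ``delicate bookkeeping point'' you defer is precisely the point where your argument fails to reach the claimed statement.

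The paper's proof gets the $a$-term from a different, model-specific identity, $Y_{T,t-1}\epsilon_t=\frac{1}{2\rho_T}(Y_{T,t}^2-Y_{T,t-1}^2+(1-\rho_T^2)Y_{T,t-1}^2-\epsilon_t^2)$, which produces the extra process $\widetilde{V}_{2,T}=\frac{1-\rho_T^2}{2\rho_T\trunc{Ts}}\sum_{t\le\trunc{Ts}} Y_{T,t-1}^2K((\trunc{Ts}-t)/h)$; since $1-\rho_T^2=-2a/T+O(T^{-2})$ and $(T\trunc{Ts})^{-1}\sum Y_{T,t-1}^2K\to(\eta^2/s)\int_0^s\calZ_a^2(r)K(\zeta(s-r))\,dr$, this term is $O_P(1)$, not $o_P(1)$, and it is the sole source of $-2a\int_0^s\calZ_a^2K(\zeta(s-r))\,dr$ in $\calD_\vartheta^a$. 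To prove the theorem as stated you must therefore decompose $\sum Y_{T,t-1}\epsilon_t K$ as the paper does and retain $\widetilde{V}_{2,T}$. There is, however, a subtlety your route makes visible and which you should have confronted rather than papered over: the paper's decomposition identifies the numerator of $D_T$ with $\trunc{Ts}^{-1}\sum Y_{T,t-1}\epsilon_tK$, whereas under (\ref{LocalToUnity}) one has $\Delta Y_{T,t}=\epsilon_t+(a/T)Y_{T,t-1}$, so the numerator literally defined by (\ref{DefDT}) carries the additional non-negligible contribution $(a/T)\trunc{Ts}^{-1}\sum Y_{T,t-1}^2K\to(a\eta^2/s)\int_0^s\calZ_a^2K(\zeta(s-r))\,dr$, which exactly cancels the limit of $\widetilde{V}_{2,T}$ --- consistent with your own $o_P(1)$ computations. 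Had you tracked this, your calculation would have ended with a reasoned discrepancy between the two decompositions; instead you asserted the stated limit via the unjustified It\^o gesture, which is the concrete gap.
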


\begin{proof} The crucial arguments to obtain {\em joint} weak convergence
of numerator and denominator of $ U_T $ have been given in detail in the proof of
Theorem~\ref{ThBasic}. Therefore, we give only a sketch of the proof stressing
the essential differences.
First, note that
$$
  U_T(s) = T^{-1/2} Y_{T,\trunc{Ts}} = \int_0^s e_T(r;s) \, d S_T(r), \qquad
  S_T(r) = T^{-1/2} \sum_{t=1}^{\trunc{Tr}} \epsilon_t,
$$
for the step function $ e_T(r;s) = (1+a/T)^{\trunc{Tr}-\trunc{Ts}} $, $ r,s \in [0,1] $,
which has uniformly bounded variation and
converges uniformly in $ r,s $ to the exponential $ e(r;s) = e^{a(s-r)} $. Hence,
firstly, the stochastic Stieltjes integral $ \int_0^s e_T(r;s) \, d S_T(r) $ exists
(via integration by parts), and, secondly, by estimating the terms of the decomposition
$ \int_0^s e_T \, d S_T - \int_0^s e d (\eta B) = \int_0^s (e_T-e) \, d (\eta B) + \int_0^s e_T \, d(S_T - \eta B) $
we see that
$$
  U_T(s) = \int_0^s e_T(r;s) \, d S_T(r) \Rightarrow \eta \int_0^s e(r;s) \, d B(r)
         = \eta \calZ_a(s),
$$
as $ T \to \infty $.
Next, note that in the local-to-unity model we have
$$
  Y_{T,t-1} \epsilon_t
    = \frac{1}{2 \rho_T}
        ( Y_{T,t}^2 - Y_{T,t-1}^2  +(1- \rho_T^2) Y_{T,t-1}^2 - \epsilon_t^2 )
$$
for all $ 1 \le t \le T $. This yields the decomposition
$$
  D_T(s) = \sum_{i=1}^3 \widetilde{V}_{i,T}(s) \Big/
  \widetilde{W}_T(s)
$$
where for $ s \in (0,1] $
\begin{eqnarray*}
  \widetilde{V}_{1,T}(s)
    & = & \frac{1}{2 \rho_T \trunc{Ts}} \sum_{t=1}^{\trunc{Ts}}
           (Y_{T,t}^2 - Y_{T,t-1}^2) K( ( \trunc{Ts}-t ) / h), \\
  \widetilde{V}_{2,T}(s)
    & = & \frac{1-\rho_T^2}{2 \rho_T \trunc{Ts}}
            \sum_{t=1}^{\trunc{Ts}} Y_{T, t-1}^2 K( ( \trunc{Ts}-t ) / h ), \\
  \widetilde{V}_{3,T}(s)
    & = & - \frac{1}{2\rho_T \trunc{Ts}} \sum_{t=1}^{\trunc{Ts}}
              \epsilon_t^2 K( (    \trunc{Ts}-t) / h ), \\
  \widetilde{W}_T(s)
    & = & \frac{1}{\trunc{Ts}^2} \sum_{t=1}^{ \trunc{Ts} } Y_{T,t-1}^2.
\end{eqnarray*}
The term $ \widetilde{V}_{1,T} $ can be treated as in the proof of Theorem~\ref{ThBasic},
namely,
\begin{eqnarray*}
  \widetilde{V}_{1,T}(s) &=&
    \frac{1}{2 \rho_T \trunc{Ts}} \sum_{t=1}^{\trunc{Ts}} ( Y_{T,t}^2 - Y_{T,t-1}^2 )
     K( ( \trunc{Ts}-t )/h ) \\
   &=& \frac{\zeta}{2s} \int_0^s U_T^2(r) K'(\zeta(s-r) ) \,  dr + o_P(1),
\end{eqnarray*}
From the proof of Theorem~\ref{ThBasic} we know that due to (E1)
$$
  \sup_s \left| \widetilde{V}_{3,T}(s) + \frac{ \sigma^2 }{2 s} \int_0^s K(\zeta(s-r)) \, dr
  \right| \stackrel{L_2}{\to} 0,
$$
as $ T \to \infty $. Consider now $ \widetilde{V}_{2,T} $. By definition of $ \rho_T $
we obtain
\begin{eqnarray*}
  \widetilde{V}_{2,T} &=& \frac{1-\rho_T^2}{2 \rho_T} \frac{1}{\trunc{Ts}}
    \sum_{t=1}^{\trunc{Ts}} Y_{T,t-1}^2 K( ( \trunc{Ts} - t )/h ) \\
   &=&
     \frac{-2a -a^2/T}{2(1+a/T)} \frac{1}{T \trunc{Ts}}
      \sum_{t=1}^{\trunc{Ts}} Y_{T,t-1}^2 K( ( \trunc{Ts} - t )/h ), \\
   &=&
     -(a/s) \eta^2 \int_0^s \calZ_a^2(r) K( \zeta(s-r) ) \, d r
     + \frac{\eta^2}{2s} K(0) \calZ_a^2(s) +  o_P(1),
\end{eqnarray*}
where due to (K2) the $ o_P(1) $ term is uniform in $ s \in (0,1] $.
Hence, $ \widetilde{V}_{1,T} $, $ \widetilde{V}_{2,T} $, and
$ \widetilde{W}_T $ are functionals of $ U_T $ up to terms of order $ o_P(1) $.
Consequently, joint weak convergence of
$ ( \widetilde{V}_{1,T}, \widetilde{V}_{2,T}, \widetilde{V}_{3,T}, \widetilde{W}_T  ) $
can be shown along the lines of the proof of Theorem~\ref{ThBasic}, and the CMT
yields the result.
\end{proof}

\section{Simulations}
\label{SecSim}

To investigate the statistical properties of the proposed monitoring procedure
we performed a simulation study.
We used the following ARMA(1,1) simulation model. Suppose
$$
  Y_{t+1} = \rho Y_t + e_t - \beta e_{t-1}, t = 1,2, \dots, T = 250,
$$
where $ Y_0 = 0 $, $ \{ e_t \} $ is a sequence of independent $
N(0,1) $-distributed error terms, and $ \rho $ and $ \beta $ are
parameters. We investigated the cases given by $ \rho = 1, 0.98,
0.95, 0.9 $ and $ \beta = -0.8, 0.5, 0, 0.5, 0.8  $. Clearly, $ \rho
= 1 $ corresponds to the unit root null hypothesis. For $ \beta = 0
$ the innovation terms are uncorrelated corresponding to $ \vartheta
= 1$. This simulation model was also used in Steland (2006), where a
monitoring procedure based on the KPSS unit root test is studied in
detail. Since part of the parameter settings used below are
identical, the results of the present numerical study can be
compared with the corresponding results in Steland (2006).

To study the monitoring rules with estimated control limits
critical values for a significance level
of $ \alpha = 5\% $ were taken from the limit process
defined in (\ref{DefD}) with estimated nuisance parameter.
To down-weight past contributions a Gaussian kernel with bandwidth $ h = 25 $
was used. The nuisance parameter $ \vartheta $ was estimated by the Newey-West estimator
at time point $t$ with lag truncation parameter $m$ chosen by
$ m = m_t = \trunc{ 4 (t/100)^{1/4} } $, $ t = k, \dots, N $. The start of
monitoring, $k$, affects the properties and has to be chosen carefully.
For the rule $ \widehat{S}_T $ we used $ k = 50 $, whereas for
$ \widehat{\widetilde{S}}_T $ a larger value, $ k = 75 $, yielded better results.

To investigate the properties of the monitoring rule, we estimate
empirical rejection rates
of the test which rejects the unit root null hypothesis if the procedure gives a signal,
the average delay, and the  average conditional delay given a signal.
For the detection rule $ \widehat{S}_T $ the ARL is defined by $ E( \widehat{S}_T ) - k + 1$.
We define the CARL as $ E( \widehat{S}_T | k \le \widehat{S}_T \le T ) - k + 1$. The definitions
for $ \widehat{\widetilde{S}}_T $ are analogous. Note that the conditional delay is
very informative under the alternative, since it informs us how quick the method
reacts if it reacts at all. In the tables average delays are given in brackets
and conditional delay in parentheses.

Table~\ref{Tab1} provides the results for the monitoring
procedures $ \widehat{S}_T $ and $ \widehat{\widetilde{S}}_T $
using estimated control limits. The curves $ c( \vartheta ) $ were
obtained by simulating from the limit laws. Overall, $
\widehat{S}_T $ performed well. The performance of the $t$-type
procedure is disappointing. When inspecting the CARL values, the
results seem to be mysterious. E.g. when comparing the CARL for $
\rho = 0.95 $ and $ \rho = 0.9 $ if $ \beta = 0 $, the procedure
seems to misbehave. To explore the reason, Figure~\ref{Fig1}
provides a part of the distribution of $ \widehat{S}_T - k + 1 $.
It can be seen that the percentage of simulated trajectories
leading to immediate detection increases considerably, but the
contribution of these cases to the calculation of the CARL is
negligible. The other trajectories yielding a signal are hard to
detect, and the signals are spread over the remaining time points
with many late signals, which suffice to yield large CARL values.
This fact shows that a single number as the CARL can not
summarized the statistical behavior sufficiently. It highlights
the benefit that the random walk null hypothesis can often be
rejected very early.

The simulation results for the control charts using transformed
statistics are summarized in Table~\ref{Tab2}. Here we used exact
control limits obtained by simulation using 20,000 repetitions.
Comparing the transformation control statistics with these control
limits yields quite accurate results if $ \beta = 0$. The $t$-type
version is preferable for $ \beta < 0 $.

Comparing the methods $ \widehat{S}_T $ (using estimated control limits)
and $ Z_T $ (using transformed statistics), our results indicate that the more
computer-intensive approach to use estimated control limits provides more accurate
results.

\begin{table}
\begin{tabular}{cccccc} \hline
 $\rho$ & \multicolumn{5}{c}{$\beta$} \\
        &   $ -0.8$ & $-0.5$ & $0.0$   & $0.5$   & $0.8$ \\ \hline
\\
\multicolumn{6}{l}{{\em Weighted DF control chart with estimated control limits,
$ \widehat{S}_T $}} \\
$1$    & $0.024$ & $0.025$ & $0.036$ & $0.154$ & $0.56$\\
$0.98$ & $0.043$ & $0.044$ & $0.062$ & $0.264$ & $0.835$\\
       & $(11.9)$ & $(12)$ & $(9)$ & $(11.3)$ & $(13)$\\
       & $[192.1]$ & $[192]$ & $[188.3]$ & $[150.3]$ & $[43.9]$\\
$0.95$ & $0.095$ & $0.098$ & $0.129$ & $0.5$ & $0.991$\\
       & $(22.3)$ & $(20.8)$ & $(15.3)$ & $(17.8)$ & $(6.7)$\\
       & $[183.4]$ & $[182.5]$ & $[176.3]$ & $[109]$ & $[8.5]$\\
$0.9$  & $0.3$ & $0.306$ & $0.36$ & $0.877$ & $1$\\
       & $(39.2)$ & $(36.9)$ & $(28.4)$ & $(18.8)$ & $(1.5)$\\
       & $[151.9]$ & $[150.3]$ & $[138.2]$ & $[41.1]$ & $[1.5]$\\
\\
\multicolumn{6}{l}{{\em $t$-type version $ \widehat{\widetilde{S}}_T $.}} \\
$1$     & $0.017$ & $0.018$ & $0.047$ & $0.301$ & $0.763$\\
$0.98$  & $0.007$ & $0.01$ & $0.092$ & $0.538$ & $0.972$\\
        & $(20.1)$ & $(16.6)$ & $(5.1)$ & $(4.2)$ & $(2.3)$\\
        & $[174.1]$ & $[173.7]$ & $[159.6]$ & $[83.1]$ & $[7.1]$\\
$0.95$  & $0.014$ & $0.024$ & $0.217$ & $0.835$ & $1$\\
        & $(6.9)$ & $(6)$ & $(6)$ & $(4.5)$ & $(1.1)$\\
        & $[172.8]$ & $[171.1]$ & $[138.4]$ & $[32.6]$ & $[1.1]$\\
$0.9$   & $0.064$ & $0.106$ & $0.545$ & $0.99$ & $1$\\
        & $(8)$ & $(7)$ & $(6.9)$ & $(2.1)$ & $(1)$\\
        & $[164.4]$ & $[157.3]$ & $[83.5]$ & $[3.8]$ & $[1]$\\
\hline
\end{tabular}
\caption{Results for the weighted DF control chart with estimated control limits, $ \widehat{S}_T $.}
\label{Tab1}
\end{table}

\begin{figure}
\includegraphics[width=8cm]{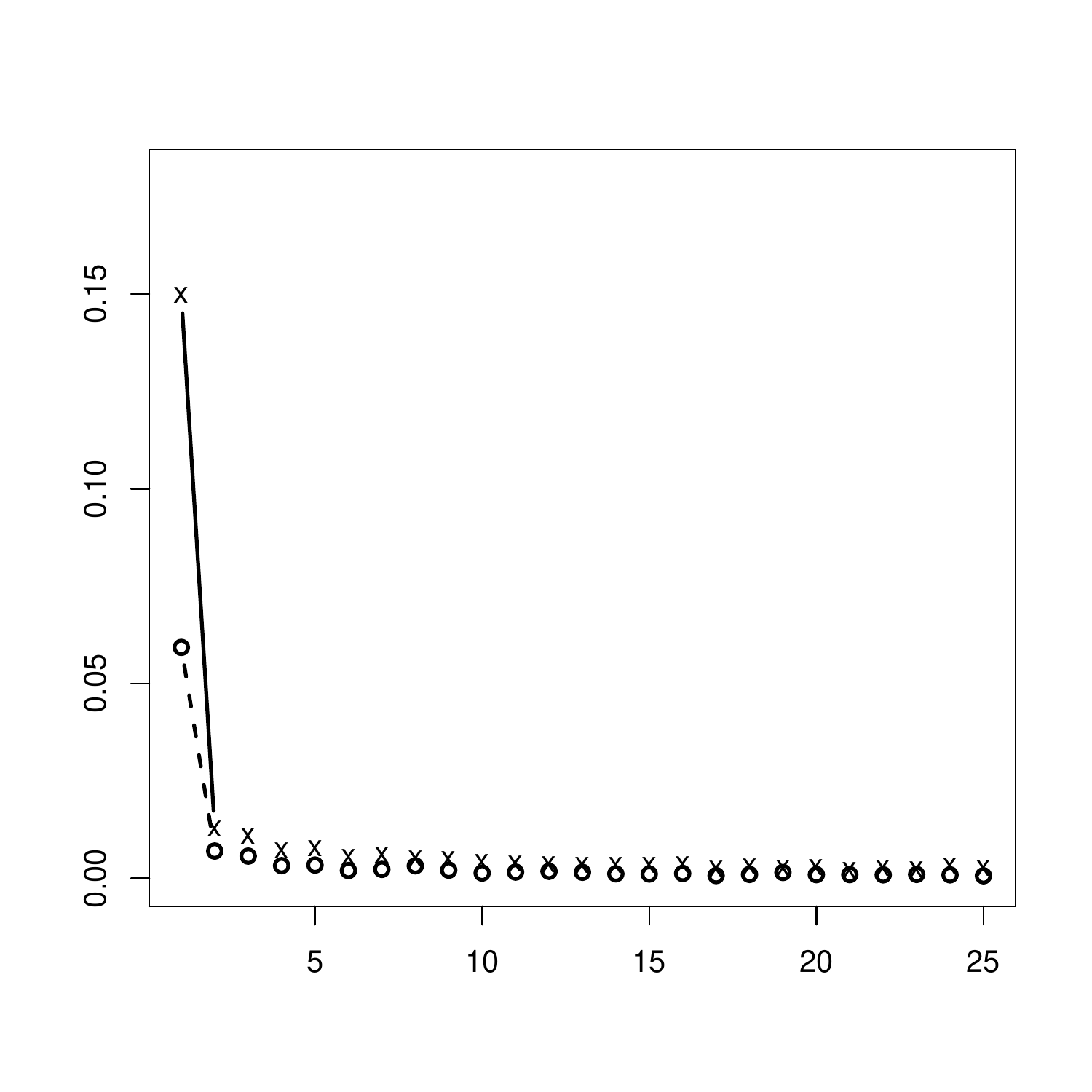}
\caption{Part of the distribution of $ \widehat{S}_T - k + 1 $ for
$ \rho = 0.95 $ (circles) and $ \rho = 0.9 $ (crosses).}
\label{Fig1}
\end{figure}

\begin{table}
\begin{tabular}{cccccc} \hline
 $\rho$ & \multicolumn{5}{c}{$\beta$} \\
        &   $ -0.8$ & $-0.5$ & $0.0$   & $0.5$   & $0.8$ \\ \hline
\\
\multicolumn{6}{l}{{\em transformed weighted DF control chart $ Z_T $}} \\
$1$     & $0.02$ & $0.02$ & $0.032$ & $0.193$ & $0.677$\\
$0.98$  & $0.031$ & $0.032$ & $0.055$ & $0.352$ & $0.949$\\
        & $(8.9)$ & $(7.3)$ & $(10.4)$ & $(29.4)$ & $(21.8)$\\
        & $[194.9]$ & $[194.6]$ & $[190.3]$ & $[140.4]$ & $[30.8]$\\
$0.95$  & $0.069$ & $0.066$ & $0.118$ & $0.684$ & $0.998$\\
        & $(12.5)$ & $(12.5)$ & $(17.2)$ & $(39.2)$ & $(10)$\\
        & $[187.9]$ & $[188.4]$ & $[179.1]$ & $[90.2]$ & $[10.4]$\\
$0.9$   & $0.199$ & $0.211$ & $0.355$ & $0.965$ & $1$\\
        & $(24.2)$ & $(23.9)$ & $(32.1)$ & $(24.2)$ & $(4.8)$\\
        & $[165.6]$ & $[163.3]$ & $[140.8]$ & $[30.5]$ & $[4.9]$\\
\\
\multicolumn{6}{l}{{\em $t$-type version $ \widetilde{Z}_T $}} \\
$1$     & $0.059$ & $0.035$ & $0.041$ & $0.439$ & $0.952$\\
$0.98$  & $0.1$ & $0.06$ & $0.073$ & $0.683$ & $0.999$\\
        & $(4.3)$ & $(3.8)$ & $(4.5)$ & $(12.5)$ & $(2.5)$\\
        & $[181.2]$ & $[189.1]$ & $[186.5]$ & $[72.2]$ & $[2.6]$\\
$0.95$  & $0.194$ & $0.113$ & $0.152$ & $0.914$ & $1$\\
        & $(4.9)$ & $(4.6)$ & $(5.6)$ & $(11.6)$ & $(1.2)$\\
        & $[163]$ & $[178.8]$ & $[171.2]$ & $[28]$ & $[1.2]$\\
$0.9$   & $0.427$ & $0.294$ & $0.365$ & $0.996$ & $1$\\
        & $(5.5)$ & $(5.2)$ & $(6.9)$ & $(4.7)$ & $(1)$\\
        & $[117.5]$ & $[143.3]$ & $[130.2]$ & $[5.4]$ & $[1]$\\
\hline
 \end{tabular}
\caption{Results for the transformed weighted DF control charts $ Z_T $ and
$ \widetilde{Z}_T $.}
\label{Tab2}
\end{table}

\section*{Acknowledgements}

The support of Deutsche Forschungsgemeinschaft (SFB 475, {\em
Reduction of Complexity in Multivariate Data Structures}) is
gratefully acknowledged. I thank Dipl.-Math. Sabine Teller for
proof-reading.


\begin{thebibliography}{99}

\bibitem{An}
Andrews, D. W. K. (1991).
Heteroscedasticity and autocorrelation consistent covariance
matrix estimation. {\em Econometrica}, {\bf 59}, 3, 817-858.

\bibitem{BD1991}
Brockwell, P.J. and Davis, R.A. (1991). {\em Time Series: Theory
and Methods}, $2$nd edition, Springer, New York.

\bibitem{ChanWei87}
Chan, N.H. and Wei, C.Z. (1987).
  Asymptotic inference for nearly nonstationary AR(1) processes.
  {\em Annals of Statistics}, {\bf 15}, 3, 1050 - 1063.

\bibitem{ChanWei88}
Chan, N.H. and Wei, C.Z. (1988).
  Limiting distributions of least squares estimates of unstable autoregressive
  processes.
  {\em Annals of Statistics}, {\bf 16}, 1, 367-401.

\bibitem{DF1}
Dickey, D. A. and Fuller, W. A. (1979).
Distribution of the estimates for autoregressive time series with
a unit root.
{\em Journal of the American Statistical Association},
{\bf 74}, 427-431.

\bibitem{EngGra87}

Engle, R.F. and Granger, W.J. (1987).
Co-integration and error correction: representation, estimation, and testing.
{\em Econometrica}, {\bf 55}, 2, 251-276.

\bibitem{ES}
Evans, G.B.A. and Savin, N.E. (1981).
  The calculation of the limiting distribution of the least squares estimator of the
  parameter in a random walk model.
  {\em Annals of Statistics}, {\bf 9}, 8, 1114-1118.

\bibitem{Fer93}
Ferger, D. (1993).
  Nonparametric detection of changepoints for sequentially observed data.
  {\em Stochastic Processes and their Applications}, {\bf 51}, 359-372.

\bibitem{Fer95}
Ferger, D. (1995).
  Nonparametric tests for nonstandard change-point problems.
  {\em Annals of Statistics}, {\bf 23}, 1848-1861.

\bibitem{Fuller76}
Fuller, W.A. (1976).
  {\em Introduction to Statistical Time Series}, Wiley, New York.


\bibitem{GirKokLei}
Giraitis, L., Kokoszka, P., and Leipus, R. (2000).
Stationary ARCH models: dependence structure and central limit theorem.
{\em Econometric Theory}, {\bf 16}, 1, 3-22.

\bibitem{Gra81}
Granger, C.W.J. (1981). Some properties of time series data and their use
in econometric model specification. {\em Journal of Econometrics}, 121-130.

\bibitem{Hu99}
Hu\v{s}kov\'{a}, M. (1999). Gradual change versus abrupt change.
{\em Journal of Statistical Planning and Inference}, {\bf 76},
109-125.

\bibitem{Hu}
Hu\v{s}kov\'{a}, M. and Slab\'{y}, A. (2001).
Permutation tests for multiple changes.
{\em Kybernetika}, {\bf 37}, 5, 605-622.

\bibitem{Ketal}
Kwiatkowski, D., Phillips, P.C.B., Schmidt, P., and Shin, Y. (1992).
Testing the null hypothesis of stationary against the alternative of a unit root:
How sure are we that economic time series have a unit root?
{\em Journal of Econometrics}, {\bf 54}, 159-178.


\bibitem{L}
Lifshits, M. A. (1982). On the absolute continuity of distributions of functionals
of random processes. {\em Theory Probab. Appl.}, {\bf 27}, 600-607.

\bibitem{NW87}
Newey, W.K. and West, K.D. (1987).
  A simple positive semi-definite, heteroscedasticity and
  autocorrelation consisten covariance matrix.
  {\em Econometrica}, {\bf 55}, 703-708.


\bibitem{PawRafSte04}
Pawlak, M., Rafaj\l owicz, E., and Steland, A. (2004).
  Detecting jumps in time series - Nonparametric setting.
  {\em Journal of Nonparametric Statistics}, {\bf 16}, 329-347.

\bibitem{Ph87}
Phillips, P.C.B. (1987).
  Time series regression with a unit root,
  {\em Econometrica}, {\bf 55}, 2, 277-302.


\bibitem{RaoMM78}
Rao, M.M. (1978).
  Asymptotic distribution of an estimator of the boundary parameter of an
  unstable process.
  {\em Annals of Statistics}, {\bf 6}, 185-190.

\bibitem{RaoMM80}
Rao, M.M. (1980).
  Correction to ,,Asymptotic distribution of an estimator of the boundary parameter
  of an unstable process.
  {\em Annals of Statistics}, {\bf 8}, 1403.

\bibitem{Ste2004a}
Steland, A. (2004).
Sequential control of time series by functionals of kernel-weighted
empirical processes under local alternatives. {\em Metrika}, {\bf 60}, 229-249.


\bibitem{Ste2005a}
Steland, A. (2005a).
Optimal sequential kernel smoothers under local nonparametric alternatives
for dependent processes. {\em Journal of Statistical Planning and Inference},
{\bf 132}, 131-147.


\bibitem{Steland05b}
Steland, A. (2005b).
  Random walks with drift - A sequential view.
  {\em Journal of Time Series Analysis}, {\bf 26}, 6, 917-942.

\bibitem{Steland05c}
Steland, A. (2006).
  Monitoring procedures to detect unit roots and stationarity.
  forthcoming: {\em Econometric Theory}.

\bibitem{Stock94}
Stock, J.H. (1994).
Unit roots, structural breaks and trends.
In: {\em Handbook of Econometrics}, {\bf 4}, 2739-2841.

\bibitem{VW}
van der Vaart, A.W. and Wellner, J.A. (1996). {\em Weak Convergence
and Empirical Processes.} Springer, New York.

\bibitem{White58}
White, J.S. (1958).
The limiting distribution of the serial coefficient in the
explosive case. {\em Ann. Math. Statist.}, {\bf 29}, 1188-1197.

\bibitem{XiaoPhillips02}
Xiao, Z., and Phillips, P.C.B. (2002).
A CUSUM test for cointegration using regression residuals.
{\em Journal of Econometrics}, {\bf 108}, 43-61.

\bibitem{Yok}
Yokoyama, R. (1980). Moment bounds for stationary mixing sequences.
{\em Z. Wahrscheinlichkeitstheorie verw. Gebiete}, {\bf 52}, 45-57.

\end{thebibliography}
\end{document}